\newtheorem{theorem}{Theorem}[section]
\newtheorem{lemma}[theorem]{Lemma}
\newtheorem{definition}[theorem]{Definition}
\newtheorem{corollary}[theorem]{Corollary}
\newtheorem{proposition}[theorem]{Proposition}
\newtheorem{remark}[theorem]{Remark}
\title{Analysis of the factorization method for a general class of boundary conditions}
\author{Mathieu Chamaillard\footnote{Laboratoire POEMS, ENSTA ParisTech, 828, Boulevard des Maréchaux, 91762 Palaiseau Cedex, France}
 \and Nicolas Chaulet\footnote{Department of Mathematics, University College London, Gower Street, London, WC1E 6BT, United Kingdom}
  \and Houssem Haddar\footnote{INRIA Saclay Ile de France / CMAP Ecole Polytechnique, Route de Saclay, 91128 Palaiseau Cedex, France}}
\begin{document}

\maketitle
\date{}

\begin{abstract}
We analyze the factorization method (introduced by Kirsch in 1998 to solve inverse
scattering problems at fixed frequency from the far field operator) for a general class of boundary conditions
that generalizes impedance boundary conditions. For instance, when the surface impedance
operator is of pseudo-differential type, our main result stipulates that the
factorization method works if the order of this operator is different from
one and the operator is Fredholm of index zero with non negative imaginary
part.   We
also provide some validating numerical examples for boundary operators of
second order with discussion on the choice of the test function.  
\end{abstract}

\section{Introduction}
The factorization method is one of the most established inversion methods for
inverse scattering problems in which one is interested in reconstructing the shape
of inclusions from knowledge of the far field operator at a fixed frequency
in the resonant regime
\cite{Kir98,kirsch2008factorization}. The present work is a contribution to the analysis of the method for a large
class of boundary conditions verified on the boundary of the scattering
object. We consider here only the scalar case, the analysis of the method for electromagnetic vectorial
problems is still an open question.

Let us recall that this method provides a characterization of the obstacle shape
using the range of an operator explicitly constructed from the far field
data available at a fixed frequency, for all observation directions and all
plane wave incident directions.  Since this characterization (and subsequent
algorithm) is independent
of the boundary conditions, a natural question would be to specify the class
of boundary conditions for which the method works. We remark that as a corollary, one also 
obtains a uniqueness result for the shape reconstruction without knowledge of
the boundary conditions (in the considered class). 

 Motivated by so-called
generalized impedance boundary conditions that model thin layer effects (due to
coatings, small roughness, high conductivity, etc...) \cite{bookSV95,BenLem96,DHJ06-S,HJ02-201,HJN05-1273}, we consider here
the cases of impedance boundary conditions where the impedance corresponds to a boundary operator acting on a
Hilbert space with values in its dual. Pseudo-differential boundary operators
can be seen as a particular case of this setting. The simplest form (classical
impedance boundary conditions) corresponds to
multiplication by a function  and has been analyzed in \cite{kirsch2008factorization}, whereas for non linear
inversion methods associated with second order operators we refer to \cite{BoChHa12,krcaip12}.

We first analyze the forward problem using a surface formulation of the
scattering problem. This method allows us to show well posedness of the
forward scattering problem under weak assumptions on the boundary operator. We then analyze the factorization method for this class
of operators. We demonstrate in particular that the method is valid for  boundary operators
``of order'' (if we see them as surface pseudo-differential operators) strictly
less than one or strictly greater than one, which are of Fredholm type with
index zero and have non negative imaginary parts. We observe that
in the first case (order < 1), the analysis of the method follows the lines of the case of
classical impedance boundary conditions as in \cite{kirsch2008factorization} while the second case is
rather similar to the case of Dirichlet boundary conditions. Our
proof however fails  in considering boundary operators of order 1 or mixtures
of operators of order less than one and operators of order greater than one. We face here the same
difficulty as one encounters in trying to answer the open question related to
the factorization method for obstacles
with mixture of Neumann and Dirichlet boundary conditions. 

For the numerical two dimensional tests, we rely on the use of classical Tikhonov-Morozov
regularization strategy to solve the factorization equation.  Our numerical
experience suggested that this method handles the case of noisy data more
easily than the
classically used spectral truncation method. We refer however to \cite{Lec06} for tricky
numerical investigations on this type of regularization in the case of noisy data. 
We also use combinations of monopoles and dipoles as test functions for implementing the
inversion algorithm. Numerical examples show how this provides better
reconstructions as compared with the use of 
monopole test functions only.

The outline of the paper is as follows. The second section is dedicated to the
introduction of the forward problem for a general class of impedance boundary
conditions and we explain the principle of the factorization method. Section 3 is devoted
to the analysis of the forward scattering problem under general assumptions on
the boundary operator, with some applicative examples motivated by thin coating
models. The analysis of the factorization method is done in section 4. The last
section is devoted to numerical implementation of the inversion algorithm and
some validating results.  An appendix is added for the proof of two technical
results.

\section{Quick presentation of the forward problem and the factorization method}
Let $D$ be an open bounded domain  of $\bR^{d}$ for $d=2,3$ with Lipschitz boundary $\Gamma$. Assume in addition that its complement $\Om:=\bR^d\setminus\overline{D}$ is connected. The scattering of an incident wave $u_i$ that is solution to the homogeneous Helmholtz equation in $\bR^{d}$ by an obstacle $D$ gives rise to a field $u_{s}$ that is solution to
\begin{equation}
	\begin{cases}\Delta u_s+k^2 u_s=0 \text{ in } \Om,\\
	\pn u_s+\Zimp u_s=-\left( \pn u_i + \Zimp u_i\right) \sg,\\
	\end{cases}
	\label{pbdiffdeb}
\end{equation}
and $u_s$ satisfies the radiation condition
\begin{equation}
\label{eq:radiation}
\displaystyle\lim_{R \to \infty} \int_{|x|=R} | \partial_r u_s -i k u_s |^2 ds=0.
\end{equation}
Here $\nu$ is the unit outward normal to $\Gamma$ and $\Zimp$ is an impedance surface operator that can possibly depend on the wavenumber $k$. In the next section we give a precise definition of $\Zimp$ (see Definition \ref{DefZ}) and we prove that under appropriate assumptions on $\Zimp$ the forward problem is well-posed, i.e. it has a unique solution in a reasonable energy space and this solution depends continuously on the incident wave. In the following, we say that $u_s$ is an outgoing solution to the Helmholtz equation if it satisfies the radiation condition \eqref{eq:radiation}.
In this case, one can uniquely define the far field pattern  $u^\infty:S^{d-1} \mapsto \mathbb{C}$ associated with the scattered field $u_{s}$  (see \cite{colton1998inverse} for further details on scattering theory) by
\begin{equation*}
 u_s(x)=u^{\infty}(\hat{x}) \gamma(d)\frac{e^{ik|x|}}{|x|^{\frac{d-1}{2}}}\left(1 +  O\left(\frac{1}{|x|}\right)\right) ,\ |x| \to \infty,
\end{equation*}
uniformly for $\hat{x}:=x/|x| \in S^{d-1}$ where $S^{d-1}$ is the unit sphere of $\bR^{d}$ and $\gamma(2):=\frac{e^{\frac{i \pi}{4}}}{\sqrt{8 \pi k}}$ and $\gamma(3):=\frac{1}{4 \pi}$.
Let us recall that for  any bounded set $\mathcal{O}$ of Lipschitz boundary $\partial \mathcal{O}$ such that $D \subset \mathcal{O}$, the following representation formula holds:
\begin{equation*}
 u^{\infty}(\hat{x})= \int_{\partial \mathcal{O}} \left(u_s(y) \frac{\partial e^{-ik\hat{x} \cdot y}}{\partial \nu(y)}
-e^{-ik\hat{x} \cdot y} \frac{\partial u_s(y)}{\partial \nu(y)} \right)ds(y),\ \hat{x} \in S^{d-1}.
\end{equation*}
We define $u^{\infty}(\cdot,\hat{\theta})$ the far field pattern associated with the  scattering of an incident plane  wave of incident direction $\hat{\theta}$, which is given by $u_i(x)=e^{ik \hat{\theta} \cdot x}$. The inverse problem we consider is then the following:  from  knowledge of $\{u^\infty(\hat{x},\hat{\theta}) \text{  for all } (\hat{x},\hat{\theta}) \in S^{d-1} \times S^{d-1}\}$ find the obstacle $D$ with minimal \textit{a priori} assumptions on $\Zimp$. 

Let us introduce $F:L^2(S^{d-1})\mapsto L^2(S^{d-1})$, the so-called far field operator defined for $g\in L^{2}(S^{d-1})$ by
\begin{equation}
\label{far fieldOp}
  (Fg)(\hat{x})=\int_{S^{d-1}} u^\infty(\hat{x},\hat{\theta}) g(\hat{\theta})ds(\hat{\theta}),\qquad \text{for } \hat x \in S^{d-1}.
\end{equation}
This operator factorizes in a particular way, namely there exists two bounded operators $G$ and $T$, defined in section \ref{sec:formal}, such that
\[
F = -GT^*G^*.
\] 
As we show in Lemma \ref{LeImG} the operator $G$ characterizes the obstacle $D$ and we have the equivalence
\[
z \in D \Longleftrightarrow \phi_z\in \mc{R}(G) 
\]
for $\phi_z(\hat{x}):= e^{-ik\hat{x}\cdot z}$ and where for any bounded linear operator $L$ defined from an Hilbert space $E_1$ into another Hilbert space $E_2$, the space $\mc{R}(L) := \{ y \in E_2\;|\; \exists\, x \in E_1  \text{ s.t. } y=Lx \}$ is the range of $L$. As shown in section \ref{sec:equi}, the Factorization Theorem  (see \cite[Theorem 2.15]{kirsch2008factorization})  applies and gives the equality between the ranges of $F_\#^{\frac{1}{2}}$ and $G$ where $F_\#$ is the self-adjoint and positive operator given by
\[
F_{\#}= |Re(F )| + Im(F ).
\]
See (\ref{defreim}) for a definition of the real and imaginary parts. As a consequence, the far field patterns for all incident plane waves are sufficient to determine the indicator function of the scatterer $D$ thanks to the equivalence
\[
z \in D \Longleftrightarrow \phi_z\in \mc{R}(F_\#^{\frac{1}{2}}) .
\]
Remark that this equivalence does not depend on the impedance operator
$\Zimp$. Hence, if justified, the factorization method gives a simple way to
compute the indicator function independently from the impedance operator.

\section{Analysis of the forward problem for a general class of impedance operators} 
\label{sec:direct}
\subsection{Mathematical framework}
\label{sec:notations}
Let $\vgamma \subset \ldeux$ equipped with the scalar product $(\cdot,\cdot)_{\vgamma}$ be a Hilbert space and assume that the embedding $  \cinfty \subset \vgamma$ is continuous and dense.  For any Hilbert space $X$ we denote by $X^*$ the space of continuous anti-linear forms on $X$ and we define the duality pairings for all $(u,v) \in X \times X^*$ by
\begin{equation*}
    \langle v,u \rangle_{X^*,X}:=v(u),\quad
    \langle u,v \rangle_{X,X^{*}}:=\overline{v(u)}.
\end{equation*}
From now on, if not needed, we  do not specify the spaces for the duality
products. We then define an impedance operator on $\vgamma$ as:
\begin{definition}
\label{DefZ}
For any frequency $k \in \bC$ an impedance operator $\Zimp$ is a linear bounded  operator from $\vgamma$ into $\vgamma^*$.
\end{definition}
\noindent In this case, equations (\ref{pbdiffdeb})-\eqref{eq:radiation} make  sense for any $u_{s}$ in 
\begin{equation*}\hlocv:=\soboloc \end{equation*}
where $\gamma_0$ is the trace operator on $\Gamma$. Problem
(\ref{pbdiffdeb})-\eqref{eq:radiation} can be seen as a particular case of: find  $u_s \in \hlocv$ such that
\begin{equation}
	\label{pbdiff}
	\begin{cases}\Delta u_s+k^2 u_s=0 \rnd,\\
	\pn u_s+\Zimp u_s=f \sg,\\	
	\displaystyle\lim_{R \to \infty} \int_{|x|=R} | \partial_r u_s -i k u_s |^2 ds=0
	\end{cases}
\end{equation}
with $f:=-(\pn u_i+\Zimp u_i)$. In fact, this problem makes sense whenever the right-hand side $f$ is in the dual space $\espacedual$ of the Hilbert space $  \espace:=\vgamma \cap \sobolev{\frac{1}{2}}$ equipped with the scalar product $  (\cdot, \cdot)_{\espace}:=(\cdot, \cdot)_{\vgamma}+(\cdot, \cdot )_{\trace} $. The aim of this section is to determine conditions on $\Zimp$ under which \eqref{pbdiff} is well posed in the sense that for all $f\in \espacedual$, it has a unique solution  in $\hlocv$ and  for any  bounded open subset $K$ of $\Om$, there exists a constant $C_K>0$ such that
\begin{equation*}
  \|u_s\|_{V(\Gamma)}+\|u_s\|_{H^1(K)} \leq C_K \|f\|_{\espacedual}.
\end{equation*}
To do so we first assume that $\Zimp$ satisfies
\begin{equation} \label{H1}
\Im(\Zimp) \geq 0,
\end{equation}
which corresponds to the physical assumption that the obstacle $D$ does not produce
energy. 
For all Hilbert space $E$ and all linear and continuous operators $T:E \mapsto E^*$ its adjoint $T^*:E \mapsto E^*$ is defined  by $ \langle T^*u,v\rangle_{E^*,E}=\langle u,Tv\rangle_{E,E^*} $  for $(u,v) \in E \times E$. The real and imaginary parts  of $T$ are then given by
\begin{equation}
\label{defreim}
    \Re(T):=\cfrac{T+T^*}{2} \quad \text{and} \quad    \Im(T):=\cfrac{T-T^*}{2i}.
\end{equation}
Finally, since $\cinfty \subset \vgamma$ then the embedding of $\espace$   into $\trace$ is dense, thus we have the following dense and continuous embeddings:
\begin{equation*}
  \vgamma \subset \ldeux \subset \vgamma^*
\end{equation*}
and
\begin{equation*}
  \espace \subset \trace \subset \ldeux \subset \traced \subset \espacedual.
\end{equation*}

\subsection{An equivalent surface formulation of the problem}
In order to optimize the hypothesis on $\Zimp$, we propose to rewrite the problem (\ref{pbdiff}) as a surface problem on the boundary $\Gamma$. To do so we use the so-called exterior Dirichlet-to-Neumann map $n_e:\trace \mapsto \traced$ defined for $h \in \trace$ by $n_e h=\pn u_{h}$
where $u_{h} \in \hloc:=\sobolo$ is the unique solution to
\begin{equation}
	\begin{cases}\Delta u_{h}+k^2 u_h=0 \rnd,\\
	 u_h=h \sg,\\	
	\displaystyle\lim_{R \to \infty} \int_{|x|=R} | \partial_r u_h -i k u_h |^2 ds=0.
	\end{cases}
	\label{dirclassique}
\end{equation}
 For $f$ in $\espacedual$, problem \eqref{pbdiff} then writes: find $u_{\Gamma} \in \espace$ such that
 \begin{equation}
\label{problemsurf}
 \Zimp u_{\Gamma}+n_e u_{\Gamma}=f
\end{equation}
and this formulation is equivalent to \eqref{pbdiff}. The surface formulation
that we adopt here is similar to the one used in \cite{vernhet} for the study of
second order surface operators.
\begin{lemma}
  \label{lem:reformulation}
 For $f$ in $\espacedual$, if $u_s \in \hlocv $ is a solution of \eqref{pbdiff} then $\gamma_0 u_s \in \espace$ is a solution of \eqref{problemsurf}. Conversely  if $u_\Gamma \in \espace$ is a solution of \eqref{problemsurf} then the unique  solution $u_s \in \hloc$ of \eqref{dirclassique} with $h=u_\Gamma$ is a solution of \eqref{pbdiff}.
\end{lemma}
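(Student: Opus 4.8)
The plan is to prove both implications simply by unwinding the definition of the exterior Dirichlet-to-Neumann map $n_e$ and invoking the uniqueness of the exterior Dirichlet problem \eqref{dirclassique} that already underlies the construction of $n_e$. The only genuinely delicate points are functional-analytic: checking that the relevant traces land in the correct spaces, and making sense of the normal derivative $\pn u_s$ of an outgoing Helmholtz solution as an element of $\traced$.

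For the forward implication, I would start from a solution $u_s \in \hlocv$ of \eqref{pbdiff} and first verify that its Dirichlet trace lies in $\espace$. By definition of $\hlocv$ one has $\gamma_0 u_s \in \vgamma$, while the standard trace theorem on $H^1_{loc}(\Om)$ gives $\gamma_0 u_s \in \trace$; hence $\gamma_0 u_s \in \vgamma \cap \trace = \espace$. Setting $u_\Gamma := \gamma_0 u_s$, the field $u_s$ solves the Helmholtz equation in $\Om$, satisfies the radiation condition, and has Dirichlet datum $u_\Gamma$ on $\Gamma$; by uniqueness of the exterior Dirichlet problem it therefore coincides with the field $u_{u_\Gamma}$ used to define $n_e$. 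Consequently $\pn u_s = n_e u_\Gamma$, and substituting this into the boundary relation $\pn u_s + \Zimp u_s = f$ yields exactly $\Zimp u_\Gamma + n_e u_\Gamma = f$, which is \eqref{problemsurf}.

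For the converse, given $u_\Gamma \in \espace$ solving \eqref{problemsurf}, I would set $u_s := u_{u_\Gamma}$, the unique solution of \eqref{dirclassique} with $h = u_\Gamma$. By construction $u_s$ satisfies the Helmholtz equation and the radiation condition, and $\gamma_0 u_s = u_\Gamma$. Since $u_\Gamma \in \espace \subset \vgamma$, the trace of $u_s$ belongs to $\vgamma$, so in fact $u_s \in \hlocv$ and not merely $\hloc$. Finally, using $\pn u_s = n_e(\gamma_0 u_s) = n_e u_\Gamma$ and $\Zimp u_s = \Zimp u_\Gamma$, equation \eqref{problemsurf} gives $\pn u_s + \Zimp u_s = n_e u_\Gamma + \Zimp u_\Gamma = f$, i.e. the boundary condition in \eqref{pbdiff}.

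The step deserving the most care, and the only real obstacle, is the rigorous meaning of the normal trace $\pn u_s \in \traced$ together with the identity $\pn u_s = n_e \gamma_0 u_s$ for a general $H^1_{loc}$ outgoing solution. This rests on the well-posedness of \eqref{dirclassique}, namely existence, uniqueness and continuous dependence for the exterior Dirichlet problem, which is precisely what guarantees that $n_e : \trace \to \traced$ is a well-defined bounded operator; since this is already assumed in the definition of $n_e$ in the excerpt, both directions reduce to bookkeeping once it is granted. I would make the normal derivative precise through the weak formulation of the Helmholtz equation combined with a Green-type identity on a bounded neighbourhood of $\Gamma$ truncated away from infinity, which exhibits $\pn u_s$ as a continuous anti-linear functional on $\trace$ and hence as an element of $\traced$.
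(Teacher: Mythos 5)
Your proof is correct and follows essentially the same route as the paper's: identify $\gamma_0 u_s \in \vgamma \cap \trace = \espace$, use uniqueness of the exterior Dirichlet problem to get $\pn u_s = n_e(\gamma_0 u_s)$, and substitute into the respective boundary relations in both directions. The extra care you take in justifying the normal trace $\pn u_s \in \traced$ and the membership $u_s \in \hlocv$ only makes explicit what the paper leaves implicit.
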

\begin{proof}
Take $f \in \espacedual$, if $u_s \in \hlocv$ is a solution of \eqref{pbdiff} then $\gamma_0 u_s$ belongs to $\espace$. Moreover
  \begin{equation*}
    \pn u_s=n_e(\gamma_0 u_s)
  \end{equation*}
  since  $u_s$ is an outgoing solution to the Helmholtz equation outside $D$  and therefore $\gamma_0 u_s $ is a solution of \eqref{pbdiff}.

Conversely assume that $u_\Gamma \in \espace$ is a  solution of \eqref{problemsurf} and define $u_s \in \hloc$  the unique solution of \eqref{dirclassique} with $h=u_\Gamma$. Thus $u_s \in \hlocv$ is a solution of \eqref{pbdiff} because we have:
  \begin{equation*}
    \pn u_s=n_e u_\Gamma = f - \Zimp u_s.
  \end{equation*}
\end{proof}

\subsection{Existence and uniqueness  of solutions}
To prove existence and uniqueness of the solution to problem \eqref{pbdiff} we use the surface formulation \eqref{problemsurf} and prove that it is of Fredholm type. Then the following Lemma on uniqueness is sufficient to also have existence and continuous dependance of the solution with respect to the right-hand side.
\begin{lemma}
\label{uniqueness_lem}
The operator $\Zimp+n_e:\espace \mapsto \espacedual$ is injective.
\end{lemma}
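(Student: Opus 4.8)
The plan is to prove injectivity by showing that any $u_\Gamma \in \espace$ satisfying $\Zimp u_\Gamma + n_e u_\Gamma = 0$ must vanish, and the natural route is to pass back to the volume solution and exploit the radiation condition together with hypothesis \eqref{H1}. So suppose $u_\Gamma \in \espace$ solves \eqref{problemsurf} with $f=0$, and let $u_s \in \hloc$ be the corresponding outgoing solution of \eqref{dirclassique} with boundary data $h = u_\Gamma$; by Lemma \ref{lem:reformulation} this $u_s$ solves the homogeneous version of \eqref{pbdiff}, i.e. $\pn u_s + \Zimp u_s = 0$ on $\Gamma$.

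First I would test the Helmholtz equation against $\overline{u_s}$ on the truncated exterior domain $\Om_R := \Om \cap \{|x|<R\}$ and integrate by parts (Green's first identity). This yields
\begin{equation*}
\int_{\Om_R} \left( |\nabla u_s|^2 - k^2 |u_s|^2 \right) dx = \int_{|x|=R} \overline{u_s}\, \partial_r u_s \, ds - \langle \pn u_s, \gamma_0 u_s \rangle,
\end{equation*}
where the boundary term on $\Gamma$ carries a sign from the outward normal convention. Using the boundary condition $\pn u_s = -\Zimp u_s$, that term becomes $\langle \Zimp u_\Gamma, u_\Gamma\rangle$. Taking imaginary parts of the whole identity, the volume integral (for $k$ real) contributes nothing to the imaginary part, so I obtain a relation equating $\Im \int_{|x|=R} \overline{u_s}\,\partial_r u_s\, ds$ with $\Im \langle \Zimp u_\Gamma, u_\Gamma\rangle = \langle \Im(\Zimp) u_\Gamma, u_\Gamma\rangle$. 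By assumption \eqref{H1} the latter is $\geq 0$.

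Next I would invoke the standard consequence of the radiation condition (Rellich's lemma together with the sign of the radiated energy): the inequality $\Im \int_{|x|=R} \overline{u_s}\,\partial_r u_s\, ds \geq 0$ combined with its forced vanishing forces $\lim_{R\to\infty}\int_{|x|=R}|u_s|^2\,ds = 0$. Concretely, the imaginary part of the far-field energy flux is nonpositive for outgoing waves unless the radiated field vanishes, so pairing this with the nonnegativity just derived pins both sides to zero, giving $\langle \Im(\Zimp)u_\Gamma,u_\Gamma\rangle = 0$ and the Rellich decay. Rellich's lemma then implies $u_s \equiv 0$ in the exterior of any large ball, and unique continuation for the Helmholtz equation (valid since $\Om$ is connected) propagates this to $u_s \equiv 0$ in all of $\Om$. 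Hence $u_\Gamma = \gamma_0 u_s = 0$, proving injectivity.

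I expect the main obstacle to be handling the boundary regularity and duality pairings rigorously: because $\Gamma$ is only Lipschitz and $\pn u_s$ lives in $\traced$ while $\gamma_0 u_s$ lives in $\trace$, the integration by parts must be justified as a duality pairing $\langle \pn u_s, \gamma_0 u_s\rangle_{\traced,\trace}$ rather than a genuine integral, and one must confirm Green's identity holds in this weak setting for $H^1_{loc}$ solutions of Helmholtz. A secondary subtlety is that \eqref{H1} only gives $\langle \Im(\Zimp)u_\Gamma,u_\Gamma\rangle \geq 0$ and not strict positivity, so the argument cannot conclude $u_\Gamma = 0$ directly from that term; the vanishing must instead come through the radiation/Rellich mechanism, which is why combining the sign of the impedance term with the sign of the far-field flux is the crux. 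The unique continuation step is routine given connectedness of $\Om$, but it is essential and should be cited explicitly.
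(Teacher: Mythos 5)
Your strategy is the same as the paper's: reduce to the homogeneous volume problem through Lemma \ref{lem:reformulation}, integrate by parts on $\Om \cap B_R$, use \eqref{H1} to fix the sign of the flux through the large sphere, and conclude by the radiation condition, Rellich's lemma and unique continuation. The two subtleties you flag (interpreting the boundary term as a duality pairing on a Lipschitz boundary, and the fact that \eqref{H1} alone cannot force $u_\Gamma=0$) are also the right ones.

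However, your execution contains two sign errors that happen to compensate, and one of them is a claim that would fail if you tried to prove it. From your own Green identity,
\begin{equation*}
\int_{\Om \cap B_R}\left(|\nabla u_s|^2-k^2|u_s|^2\right)dx=\int_{|x|=R}\overline{u_s}\,\partial_r u_s\,ds+\langle \Zimp u_\Gamma,u_\Gamma\rangle ,
\end{equation*}
taking imaginary parts gives
\begin{equation*}
\Im\int_{|x|=R}\overline{u_s}\,\partial_r u_s\,ds=-\Im\langle \Zimp u_\Gamma,u_\Gamma\rangle\le 0 ,
\end{equation*}
i.e. the flux is \emph{nonpositive}, not nonnegative as you assert: the impedance term enters your identity with a plus sign, so its imaginary part transfers to the flux with a minus sign (this is precisely inequality \eqref{uniqneg} in the paper). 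Symmetrically, your statement that the imaginary part of the flux is nonpositive for outgoing waves is backwards: expanding the radiation condition,
\begin{equation*}
\int_{|x|=R}\left(|\partial_r u_s|^2+k^2|u_s|^2\right)ds=\int_{|x|=R}\left|\partial_r u_s-iku_s\right|^2 ds+2k\,\Im\int_{|x|=R}\overline{u_s}\,\partial_r u_s\,ds ,
\end{equation*}
so for a radiating solution the flux is asymptotically \emph{nonnegative}; it converges to $k|\gamma(d)|^2\|u^\infty\|^2_{L^2(S^{d-1})}$. The correct pinching runs the other way: \eqref{H1} forces the flux to be $\le 0$ (and independent of $R$), the identity above then forces $\int_{|x|=R}\left(|\partial_r u_s|^2+k^2|u_s|^2\right)ds\to 0$, and Rellich's lemma plus unique continuation give $u_s\equiv 0$ in $\Om$, hence $u_\Gamma=\gamma_0 u_s=0$. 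Flipping both of your signs repairs the argument and makes it coincide with the paper's proof; as written, though, both intermediate assertions are false, and the proof only reaches the correct conclusion because the two errors cancel.
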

\begin{proof}
Thanks to Lemma \ref {lem:reformulation} it is sufficient to prove uniqueness for the volume  equation \eqref{pbdiff}.
Let $u_s$ be a solution of \eqref{pbdiff} with $f=0$ then we have
\begin{equation*}
  \langle \pn u_s,u_s \rangle+\langle\Zimp u_s,u_s\rangle=0.
\end{equation*}
Hypothesis \eqref{H1} implies:
\begin{equation}
  \label{uniqneg}
  \Im \langle \pn u_s, u_s \rangle \leq 0.
\end{equation}
Let us denote by $B_r$ an open ball of radius $r$ which contains $\overline{D}$.  A Green's formula in $\Omega_r:=B_r\setminus \overline{D}$ yields:
\begin{equation*}
  \Im \left( \int_{\partial B_r} \partial_r u_s \overline{ u_s } ds \right)=\Im \langle \pn u_s,u_s\rangle \leq 0,
\end{equation*}
 by \eqref{uniqneg}. Thanks to the Rellich lemma \cite{colton1998inverse},  $u_s=0$ outside $B_r$ and by unique continuation we have  $u_s=0$ in $\Om$. 
\end{proof}
It still remains to give some hypothesis under which $\Zimp+n_{e}$ is of
Fredholm type. To do so, we need the following fundamental properties of the Dirichlet-to-Neumann map.
\begin{proposition}
\label{fredne}
There exists a bounded operator $C_e$ from $ \trace$ into $\traced$ 
which satisfies the following coercivity and sign conditions: $ \exists c_e>0$ such that for all $x \in \trace$:
\begin{equation*}
 \begin{cases}
\Re\langle C_e x,x \rangle \geq c_e \| x\|^2_{ \sobolev{\frac{1}{2}}} ,\\
\Im\langle C_e x,x \rangle \leq 0,
\end{cases}
\end{equation*}
and a compact operator $K_e$~ from $\trace $ into $ \traced$ such that:
\begin{equation*}
 n_e=-C_e+K_e.
\end{equation*}
\end{proposition}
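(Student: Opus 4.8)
The plan is to compare $n_e$ with the Dirichlet-to-Neumann operator of a coercive auxiliary problem sharing the same principal part, so that their difference gains regularity and is therefore compact. Concretely, I would introduce the exterior Dirichlet-to-Neumann map $n_0:\trace\mapsto\traced$ of the coercive equation $-\Delta u+u=0$: for $h\in\trace$ let $u_0\in\hloc$ be the unique decaying solution of $-\Delta u_0+u_0=0$ in $\Om$ with $\gamma_0 u_0=h$, and set $n_0 h:=\pn u_0$. Existence, uniqueness and the bound $\|u_0\|_{H^1(\Om)}\le C\|h\|_{\sobolev{\frac12}}$ come from Lax--Milgram applied to the coercive form $\int_\Om(\nabla u\cdot\nabla\bar v+u\bar v)$, and since $\Delta u_0=u_0\in L^2(\Om)$ the normal trace $\pn u_0$ is well defined in $\traced$ with $\|\pn u_0\|_{\traced}\le C\|u_0\|_{H^1(\Om)}$; thus $n_0$ is bounded. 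I then set $C_e:=-n_0$.

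The two sign conditions would follow from a single Green's identity. Because the outward normal of $\Om$ along $\Gamma$ is $-\nu$ and $u_0$ decays at infinity, integrating $-\Delta u_0+u_0=0$ against $\overline{u_0}$ gives
\[
\langle C_e h,h\rangle=-\langle n_0 h,h\rangle=\int_\Om\bigl(|\nabla u_0|^2+|u_0|^2\bigr)=\|u_0\|_{H^1(\Om)}^2 .
\]
This number is real, so $\Im\langle C_e h,h\rangle=0\le0$, and by the trace inequality $\|h\|_{\sobolev{\frac12}}=\|\gamma_0 u_0\|_{\sobolev{\frac12}}\le C\|u_0\|_{H^1(\Om)}$ one gets $\Re\langle C_e h,h\rangle=\|u_0\|_{H^1(\Om)}^2\ge c_e\|h\|_{\sobolev{\frac12}}^2$ with $c_e:=C^{-2}$. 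This settles the coercivity and sign requirements on $C_e$.

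It remains to prove that $K_e:=n_e+C_e=n_e-n_0$ is compact. For $h\in\trace$ let $u_h\in\hloc$ be the outgoing solution of \eqref{dirclassique} and set $w:=u_h-u_0$, so that $K_e h=\pn w$. Subtracting the equations, $w$ is the outgoing solution of
\[
\Delta w+k^2 w=-(1+k^2)\,u_0\ \text{ in }\Om,\qquad \gamma_0 w=0 .
\]
I would factor $h\mapsto\pn w$ through the source: $h\mapsto u_0$ is bounded into $H^1(\Om)$ and, using the exponential decay of $u_0$ at infinity, compact into $L^2(\Om)$ via the Rellich embedding, so $h\mapsto F:=-(1+k^2)u_0$ is compact from $\trace$ into $L^2(\Om)$; the solution map $F\mapsto w$ is bounded by well-posedness of the exterior problem (the content of this very section); and since $\Delta w=F-k^2w\in L^2$, the Neumann-trace map $w\mapsto\pn w$ is bounded into $\traced$. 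Being a compact map composed with bounded maps, $K_e$ is compact, which yields $n_e=-C_e+K_e$.

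I expect this compactness to be the delicate point, because $\Gamma$ is only Lipschitz so $H^2$ elliptic regularity is not available; the device above circumvents it by extracting compactness from the source $F$ through Rellich, needing only \emph{boundedness} of the Helmholtz solution map and of the Neumann trace on $\{w\in\hloc:\Delta w\in L^2\}$. The unboundedness of $\Om$ is the remaining technical nuisance and is absorbed by the exponential decay of $u_0$: one approximates $F$ in operator norm by its restriction to a large ball $B_R$, where Rellich applies directly, and concludes from the fact that a norm limit of compact operators is compact.
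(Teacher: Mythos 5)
Your construction of the coercive part is correct and takes a genuinely different route from the paper: you compare $n_e$ with the Dirichlet-to-Neumann map $n_0$ of the coercive equation $-\Delta u+u=0$ posed on the \emph{unbounded} exterior $\Om$, and the identity $\langle C_e h,h\rangle=\|u_0\|^2_{H^1(\Om)}$ gives both sign conditions at once. The paper instead truncates the \emph{domain} from the start: it works on $\Omega_r=\Om\cap B_r$ with the exact transparent condition $\partial_r u=S_ru$ on $\partial B_r$, defines $C_e$ and $K_e$ through sesquilinear forms on $\Omega_r$, and imports the sign properties of $S_r$ from the literature. This difference is not cosmetic, as the compactness step shows.

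The gap is precisely there. You factor $K_e$ as (Neumann trace)$\circ$(solution map $F\mapsto w$)$\circ$($h\mapsto F$), with $h\mapsto F$ compact into $L^2(\Om)$, and assert that ``the solution map $F\mapsto w$ is bounded by well-posedness of the exterior problem.'' This is false on the unweighted space $L^2(\Om)$: since $k^2>0$ lies in the continuous spectrum of the Dirichlet Laplacian on the exterior domain, the outgoing resolvent $(\Delta+k^2)^{-1}$ is \emph{not} a bounded operator from $L^2(\Om)$ into $\hloc$; it is bounded only between polynomially weighted spaces (limiting absorption principle) or from compactly supported sources with constants depending on the support. (Moreover, the well-posedness established in this section of the paper concerns boundary data $f\in\espacedual$ for the homogeneous equation, not volume sources, so it cannot be invoked for this purpose.) Hence the composition ``compact into $L^2(\Om)$ followed by a bounded solution map'' is not available. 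Your proposed repair --- truncate the source to $B_R$ and pass to a norm limit of compact operators --- meets the same obstruction: to get $\|K_e-K_e^R\|\to 0$ you must bound $\pn v_R$, where $v_R$ is the outgoing solution with source $(1-\chi_R)F$ supported in $\Om\setminus B_R$, by a constant that does not grow fast enough to beat the exponential decay of $F$; such a uniform-in-$R$ estimate for far-away sources observed near $\Gamma$ is exactly a weighted resolvent (or Green's function decay) estimate, and it is the nontrivial missing ingredient. The same issue already affects the claimed compactness of $h\mapsto u_0$ into $L^2(\Om)$, which needs a quantitative Agmon-type exponential decay estimate rather than a bare appeal to Rellich, since Rellich fails on unbounded domains. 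The paper's truncation of the domain (rather than of the source) with the exact DtN map $S_r$ avoids all of this: every object lives on the bounded Lipschitz domain $\Omega_r$, where the compact embedding of $H^1(\Omega_r)$ into $L^2(\Omega_r)$ yields the compactness of $K_e$ with no decay or resolvent estimates at all.
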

\noindent The proof of this Proposition is classical and is given in the appendix for the
reader convenience. 
\begin{corollary}
\label{existence}
If one of the following assumptions is fulfilled
\begin{enumerate}
\item  the operator $\Zimp$ writes as $-C_Z+K_Z$ where $C_Z:\espace \mapsto \espacedual$ satisfies:
\begin{equation}
\label{eq:poscz}
 \Re\langle C_Z x,x \rangle - \Im \langle C_Z x,x \rangle  \geq c_z \| x\|^2_\vgamma \ \forall x \in \vgamma,
\end{equation}
for $c_z>0$ independent of $x$ and  $K_Z:\espace \mapsto \espacedual$ is a compact operator,
\item the space $\vgamma$ is compactly embedded into $\trace$ and $\Zimp:\vgamma\mapsto\vgamma^*$ writes as $T_Z+K_Z$ for some isomorphism $T_Z:\vgamma \mapsto \vgamma^*$ and compact operator
$K_Z:\vgamma \mapsto \vgamma^*$, 
\item the space $\trace$ is compactly embedded into $\vgamma$,
\end{enumerate}
    then  $\Zimp+n_e$ is an isomorphism from $\espace$ into $\espacedual$.
\end{corollary}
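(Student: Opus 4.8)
The plan is to prove, under each of the three hypotheses, that $\Zimp+n_e$ is a Fredholm operator of index zero from $\espace$ into $\espacedual$; since Lemma~\ref{uniqueness_lem} already provides injectivity, index zero then yields surjectivity and the bounded inverse theorem gives the isomorphism. The common scheme is to decompose $\Zimp+n_e=A+K$, where $A\colon\espace\to\espacedual$ is an isomorphism and $K\colon\espace\to\espacedual$ is compact, so that Fredholmness of index zero is immediate.

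Under the first hypothesis I would substitute the splitting $n_e=-C_e+K_e$ of Proposition~\ref{fredne} to get $\Zimp+n_e=-(C_Z+C_e)+(K_Z+K_e)$, and take $A=-(C_Z+C_e)$. The crux is to show that $C_Z+C_e$ is coercive for the full norm $\|\cdot\|_\vgamma^2+\|\cdot\|_{\sobolev{\frac{1}{2}}}^2$ of $\espace$. To this end I would test the sesquilinear form associated with $C_Z+C_e$ against the functional $\Re-\Im$ (equivalently, multiply by $1+i$ and take the real part). The sign and coercivity conditions of Proposition~\ref{fredne} give $\Re\langle C_e x,x\rangle-\Im\langle C_e x,x\rangle\ge c_e\|x\|_{\sobolev{\frac{1}{2}}}^2$, the inequality $-\Im\langle C_e x,x\rangle\ge0$ being exactly what keeps this contribution from changing sign, while \eqref{eq:poscz} gives $\Re\langle C_Z x,x\rangle-\Im\langle C_Z x,x\rangle\ge c_z\|x\|_\vgamma^2$. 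Adding the two produces coercivity on $\espace$, whence the Lax--Milgram lemma makes $(1+i)(C_Z+C_e)$, and therefore $C_Z+C_e$, an isomorphism. The perturbation $K_Z+K_e$ is compact from $\espace$ into $\espacedual$ because $K_Z$ is so by assumption and $K_e$ factors as $\espace\hookrightarrow\trace\xrightarrow{K_e}\traced\hookrightarrow\espacedual$ through the continuous embeddings of Section~\ref{sec:notations}.

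For the second and third hypotheses I would first note that the compact embedding collapses the intersection $\espace=\vgamma\cap\sobolev{\frac{1}{2}}$ onto one factor with an equivalent norm: if $\vgamma\hookrightarrow\trace$ compactly then $\espace=\vgamma$ and $\espacedual=\vgamma^*$, whereas if $\trace\hookrightarrow\vgamma$ compactly then $\espace=\trace$ and $\espacedual=\traced$. In the second case $A=T_Z$ is the given isomorphism and the remaining term is compact, since $n_e$ factors as $\espace=\vgamma\hookrightarrow\trace\xrightarrow{n_e}\traced\hookrightarrow\vgamma^*=\espacedual$, beginning with the compact embedding; thus $\Zimp+n_e=T_Z+(K_Z+n_e)$. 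In the third case the coercivity $\Re\langle C_e x,x\rangle\ge c_e\|x\|_{\sobolev{\frac{1}{2}}}^2$ alone makes $-C_e$ an isomorphism on $\espace=\trace$, while $\Zimp$ becomes compact because it factors as $\espace=\trace\hookrightarrow\vgamma\xrightarrow{\Zimp}\vgamma^*\hookrightarrow\traced=\espacedual$, again beginning with the compact embedding; hence $\Zimp+n_e=-C_e+(K_e+\Zimp)$ has the required form.

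The one genuinely delicate point is the coercivity of case~1: one has to merge a coercivity in the $\vgamma$ norm (from $C_Z$) with a coercivity in the $\sobolev{\frac{1}{2}}$ norm (from $C_e$) into a single estimate for the intersection norm, and it is precisely the sign condition $\Im\langle C_e x,x\rangle\le 0$ that allows the $\Re-\Im$ test to combine the two without loss. Everything else, namely boundedness of the forms and compactness of the cross terms, follows routinely from the dense continuous embeddings recalled in Section~\ref{sec:notations}.
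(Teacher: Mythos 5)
Your proof is correct and follows essentially the same route as the paper: the same three decompositions of $\Zimp+n_e$ into (isomorphism) $+$ (compact), the same use of Proposition \ref{fredne} and of the compact embeddings to identify $\espace$ with $\vgamma$ or $\trace$ in cases 2 and 3, and the same conclusion via Fredholm index zero combined with the injectivity Lemma \ref{uniqueness_lem}. The only cosmetic difference is in case 1, where the paper gets $\left|\langle (C_e+C_Z)x,x\rangle\right| \geq c\,\|x\|^2_{\espace}$ from the inequality $|z|\geq \left(|\Re(z)|+|\Im(z)|\right)/\sqrt{2}$, while you rotate the form by $(1+i)$ so that its real part becomes $\Re-\Im$ and invoke Lax--Milgram; the two devices are equivalent.
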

\begin{proof}
Assume that the first assumption is fulfilled. Thanks to  Proposition \ref{fredne}, there exists a compact operator $K_e:\trace \mapsto \traced$ and  an operator $C_e:\trace \mapsto \traced$  with coercive real  part such that $n_e = -C_e+K_e$. For all $z$ in $\mathbb{C}$ we have
\begin{equation*}
  | z | \geq \frac{|\Re(z)|+|\Im(z)|}{\sqrt{2}}, 
\end{equation*}
and then for all $x \in \espace$
\begin{align*}
 \left|\langle (C_e+C_Z)x,x \rangle\right|& \geq \cfrac{|\Re\langle (C_e +C_Z) x,x \rangle| + |\Im\langle (C_e+C_Z) x,x\rangle|}{\sqrt{2}} \\
 &\geq \cfrac{
\Re\langle C_e x,x \rangle+ \Re\langle C_Z x, x \rangle- \Im\langle C_Z x,x\rangle-\Im\langle C_e x,x\rangle}{\sqrt{2}},
\end{align*}
and from  \eqref{eq:poscz}
\begin{equation*}
 \left|\langle (C_e+C_Z)x,x \rangle\right|\geq
\cfrac{\Re\langle C_e x,x \rangle+ c_z\|x\|^2_\vgamma-\Im\langle C_e x,x\rangle}{\sqrt{2}}.
\end{equation*}
Proposition \ref{fredne} yields:
\begin{equation*}
\left|\langle (C_e+C_Z)x,x \rangle\right|\geq
\cfrac{c_e \|x\|^2_\trace+ c_z \|x\|^2_\vgamma}{\sqrt{2}},
\end{equation*}
and  then $C_e+C_Z:\espace \mapsto \espacedual$ is coercive on $\espace$. Furthermore $K_e+K_Z$  is compact from  $\espace$ into $\espacedual$ and
\begin{equation*}\Zimp+n_e=-(C_e+C_Z)+K_e+K_Z \end{equation*}
which implies that $\Zimp+n_e$ is a Fredholm type operator of index zero. The conclusion is a direct consequence of the uniqueness Lemma \ref{uniqueness_lem}.

If we suppose that the second assumption is fulfilled,  then $V(\Gamma) = \espace$ and they have equivalent norms. In addition, $n_e:\espace \mapsto \espacedual$ is a compact operator and then  $\Zimp+n_e:\espace \mapsto \espacedual$ is clearly a Fredholm type operator of index zero. 

Finally if we assume that the last assumption is fulfilled then $\trace = \espace$ and they have equivalent norms. Thanks to Proposition \ref{fredne} there exist an isomorphism $C_e:\trace \mapsto \traced$ and a compact operator $K_e:\trace \mapsto \traced$ such that  $ n_e=-C_e+K_e$. Since $\trace$ is compactly embedded into $\vgamma$, the operator $\Zimp:\trace \mapsto \traced$ is compact and so is  $K_e+\Zimp:\trace \mapsto \traced$. 
Thus   $\Zimp+n_e:\trace \mapsto \traced$ is a Fredholm type operator of index zero and by Lemma \ref{uniqueness_lem} it is an isomorphism.
\end{proof}

\begin{remark}[Example of applications]
\label{sec:exemple_application}
We denote by $\gradt$ the surface gradient operator on $\Gamma$, $\divt$  the surface divergence operator on $\Gamma$ which is the $L^2$ adjoint of $-\gradt$ and 
$\Delta_\Gamma:=\divt\gradt$ the Laplace Beltrami operator on $\Gamma$ (see \cite{Ned01} for more details on these surface operators). 
We consider
\[\Zimp=\Delta_\Gamma \nudelta \Delta_\Gamma - \divt \mudelta \gradt
+\lambdadelta\] with $(\nudelta,\mudelta,\lambdadelta) \in
(L^{\infty}(\Gamma))^3$ such that the imaginary parts of the bounded functions
$\nu_k$, $\mu_k$ and $\lambda_k$ are non negative and such that the real part
of $\nu_k$ is positive definite.
This example is motivated by high order asymptotic models  associated with thin
coatings or imperfectly conducting obstacles (see \cite{BenLem96,HJN05-1273}). One can easily check that classical variational
techniques cannot let us conclude on the analysis of the scattering
problem associated with $\Zimp$ while the method developed above, namely point
3 of Corollary \ref{existence}, indicates that
this problem is well posed with
$\vgamma:=\sobolev{2}$.

\end{remark}

\section{Factorization of the far field operator}
We shall assume here  that one of  the hypothesis of Corollary \ref{existence}
is fulfilled so that  $\Zimp + n_e : \espace \mapsto \espacedual $ is an
isomorphism and problem \eqref{pbdiff} is well posed.
\subsection{Formal factorization}
\label{sec:formal}
In this part we give a formal factorization of the far field operator  defined by \eqref{far fieldOp} under the form $F = -GT^{*}G^{*}$. To achieve this objective, we proceed as for classical impedance boundary conditions. Let $G,\traceH$ and $\partialH$ be defined by $Gf:=u^\infty$ where $u^\infty$ is the far field associated with the solution $u_s$ of \eqref{pbdiff}, $\traceH g:=\gamma_0 v_g$ and  $\partialH g= \gamma_1 v_g$ where for $g\in L^2(S^{d-1})$,
\[
v_{g}(x):= \int_{S^{d-1}} e^{ik\hat{\theta}\cdot x} g(\hat{\theta}) \,ds(\hat{\theta}).
\] 
The operator $\gamma_1$ denotes the normal derivative trace operator defined for $v \in \{ w\in \hloc \text{ s.t. } \Delta w \in L^2(\Om)\}$ by
\[
\gamma_1 v := \partial_\nu v|_\Gamma.
\]
We recall that $F : L^2(S^{d-1}) \rightarrow L^2(S^{d-1})$ is defined for $g \in L^2(S^{d-1})$ by
\[
(Fg)(\hat{x}) = \int_{S^{d-1}} u^\infty(\hat{x},\hat{\theta}) g(\hat{\theta}) ds(\hat{\theta})
\]
where $u^\infty(.,\hat{\theta})$ is the far field pattern associated with the incident wave $x \mapsto e^{ik \hat{\theta}\cdot x}$. For $(\hat \theta,\hat x) \in (S^{d-1})^2$, this far field writes as
$u^\infty(\hat x,\hat{\theta}) = -[G(\Zimp e^{ik \hat{\theta}\cdot x}+\gamma_1 e^{ik \hat{\theta}\cdot x})](\hat x)$. Hence, by linearity of the forward problem with respect to the incident waves, 
\begin{align*}
 ( F g)(\hat x)=\int_{S^{d-1}}-[G(\Zimp e^{ik \hat{\theta}\cdot x}+\gamma_1e^{ik \hat{\theta}\cdot x})](\hat x)g(\hat{\theta})ds(\hat{\theta})=
-G\left(\Zimp v_g+\gamma_1 v_g\right)(\hat x)
\end{align*}
and then we get the first step of the formal factorization:
\begin{equation}
  \label{eq:premier_decomposition}
  F=-G(\Zimp\traceH+\partialH).
\end{equation}
Formally, the adjoints of $\traceH$ and $\partialH$  are respectively given for all $q$ and $\hat{x} \in S^{d-1}$ by:
\begin{equation*}
    (\traceH^* q)(\hat{x})=\displaystyle \int_{\Gamma} e^{-ik \hat{x}y}q(y) ds(y),\quad
    (\partialH^* q)(\hat{x})=\displaystyle\int_{\Gamma} \frac{\partial e^{-ik \hat{x}y}}{\partial \nu(y)}q(y)ds(y).
\end{equation*}
Therefore,
\begin{equation}
 \label{eq:faradj}
  (\Zimp\traceH+\partialH)^*q(\hat{x})=\displaystyle \int_{\Gamma} e^{-ik \hat{x}y}\Zimp^*q(y)ds(y)+\int_{\Gamma} \frac{\partial e^{-ik \hat{x}y}}{\partial \nu(y)}q(y)ds(y).
\end{equation}
We recognize that the latter is the far field pattern associated with
\begin{equation}
 \label{eq:defv}
 v:= \SL(\Zimp^*q)+\DL(q),
\end{equation}
where $\SL$ et $\DL$ are the single and double layer potentials associated with the wave number $k$:
\begin{equation*}
 \begin{cases}
  \SL(q)(x)=\displaystyle\int_{\Gamma} G_{k}(x-y)q(y)ds(y) ,\ x \in \mathbb{R}^d\setminus{\Gamma}, \\
  \DL(q)(x)=\displaystyle\int_{\Gamma} \frac{\partial  G_{k}(x-y)}{\partial \nu(y)}q(y)ds(y),\ x \in \mathbb{R}^d\setminus{\Gamma},
 \end{cases}
\end{equation*}
and $G_{k}$ is the Green's outgoing  function of $\Delta+k^2$ given for $x \in \mathbb{R}^d$ by
\begin{equation*}
  G_{k}(x)=    \frac{i}{4}H_0^{1}(k|x|) \;  \text{ if } d=2; \qquad 
    G_{k}(x) = \frac{e^{ik|x|}}{4 \pi|x|}  \text{ if } d = 3, 
\end{equation*}
where $H_0^{1}(k|x|)$ is the Hankel function of the first kind and order zero.
The function $v$ defined by \eqref{eq:defv} is a solution of problem \eqref{pbdiff} with the right hand-side
\begin{align*}
  f&=\Zimp v +\pn v=\Zimp\traceps \Zimp^*q+\normalepd q + \Zimp \tracepd q +\normaleps \Zimp^* q, \end{align*}
where
\begin{equation*}
  \begin{matrix}
    \traceps&:=\gamma_0 \SL,&
    \tracepd&:=\gamma_0\DL,\\
    \normaleps&:=\gamma_1 \SL,&
    \normalepd&:=\gamma_1 \DL.    
  \end{matrix}
\end{equation*}
Thus we can deduce that
\begin{equation}
 \label{decomposition_hstart}  
  (\Zimp \traceH+\partialH)^*=GT,
\end{equation}
with
\begin{equation*}
  T:=\Zimp\traceps \Zimp^*+\normalepd + \Zimp \tracepd +\normaleps \Zimp^*
\end{equation*}
and from \eqref{eq:premier_decomposition} we finally get
\begin{equation}
  \label{eq:facto}
  F=-G T^*G^*.
\end{equation}

\subsection{Appropriate function setting}
Thanks to \eqref{eq:facto}, the operator $F$ has the form required by the
Factorization Theorem \cite[Theorem 2.15]{kirsch2008factorization} that allows us to prove that $D$ is characterized by the range of $F_{\#}^\frac{1}{2}$.
We just have to find two Hilbert spaces $X  $ and $Y$ such that the operators:
\begin{align*}
  G:X &\longmapsto Y,\\
  T^*:X^*& \longmapsto X,
\end{align*}
are bounded. The space $X$ has to be a reflexive Banach space such that $X^* \subset U \subset X$ with continuous and dense embedding for some Hilbert space $U$. The space $Y$ being the data space, the choice $Y=L^2(S^{d-1})$ seems appropriate. For the space $U$ we take $L^2(\Gamma)$, and given that $G$ maps a right-hand side $f$ to the far field pattern  of the solution to \eqref{pbdiff}, $\espacedual$ is a natural choice for  $X$. Thus $T$ has to be bounded from $\espace$ onto $\espacedual$ which is false in general since $\Zimp\traceps \Zimp^*:\espace \mapsto \espacedual$ is not necessarily bounded. This is in particular the case for the impedance operator $\Zimp = \Delta_\Gamma$ which is continuous from $\vgamma=H^1(\Gamma)$ into $H^{-1}(\Gamma)$. To overcome this difficulty, let us introduce the following Hilbert space:
\begin{equation*}
 \esplambda:=\left\{ u \in \espace ,\ \Zimp^*u \in \sobolev{-\frac{1}{2}} \right\}
\end{equation*}
associated with the scalar product
\begin{equation*}
(u,v)_\esplambda=(u,v)_{\espace}+(\Zimp^*u,\Zimp^*v)_{\traced}.
\end{equation*}
We denote by $\|\cdot\|_{\esplambda}$ the norm associated with its scalar product and this norm  is equivalent to the norm
\begin{equation*}
  ||| \cdot |||^2_{\esplambda} := \| \cdot \|^2_{\trace} + \|\Zimp^* \cdot\|^2_{\traced}.
\end{equation*}
Indeed, since $\Zimp+n_e:\espace \mapsto \espacedual$ is an isomorphism, $(\Zimp+n_e)^*:\espace \mapsto \espacedual$ is also an isomorphism and there exists $C>0$ such that for all $v$ in $\esplambda$
\begin{align*}
  \|v\|_{\trace} \leq C \| (\Zimp+n_e)^* v\|_{\espacedual} \leq C \| (\Zimp+n_e)^* v\|_{\traced}\\ \leq C \left( \|v\|_\trace+\|\Zimp^* v\|_{\traced} \right),
\end{align*}
and then there exists $C>0$ such that for all $v \in \esplambda$:
\begin{equation*}
  \frac{1}{C}||| v |||_{\esplambda} \leq \| v \|_{\esplambda} \leq C ||| v |||_{\esplambda}.
\end{equation*}
 \begin{remark}
  \label{rem:inclusion}
  If we have the inclusion $\trace \subset \vgamma$ then:
  \begin{equation*}
    \espace=\trace \quad \text{ and } \quad \esplambda=\trace.
  \end{equation*}
\end{remark}
With this space, we recover some symmetry for the spaces on which $T$ is defined and the factorization \eqref{eq:facto} makes sense as proven in the remainder of this section.
But first of all we indicate an important property of $\esplambda$. 
\begin{lemma}
\label{Lambdabien}
 The space $\esplambda$ is  dense in $\espace$ and
\begin{equation*}
 	\esplambda \subset \espace \subset \sobolev{\frac{1}{2}} \subset L^2(\Gamma) \subset  \sobolev{-\frac{1}{2}} \subset \espacedual \subset \esplambda^*.
\end{equation*}
Moreover, if $\vgamma$ is compactly embedded in $\trace$ then $\esplambda$  is compactly embedded in $\espace$.
\end{lemma}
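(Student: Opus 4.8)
The plan is to deduce every assertion from the single structural fact, established just above, that $(\Zimp+n_e)^*:\espace\to\espacedual$ is an isomorphism. The first step is to rewrite the defining condition of $\esplambda$ in terms of this isomorphism. Since every $u\in\espace$ lies in $\trace$ and $n_e^*:\trace\to\traced$ is bounded (being the adjoint of the bounded map $n_e:\trace\to\traced$), one always has $n_e^*u\in\traced$; hence
\[
\Zimp^*u\in\traced\iff(\Zimp+n_e)^*u=\Zimp^*u+n_e^*u\in\traced,
\]
and therefore
\[
\esplambda=\big\{u\in\espace:\ (\Zimp+n_e)^*u\in\traced\big\}=\big((\Zimp+n_e)^*\big)^{-1}(\traced).
\]
This identity drives the whole proof.

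Second, I would read off the density. Because $\big((\Zimp+n_e)^*\big)^{-1}:\espacedual\to\espace$ is a homeomorphism and $\traced$ is dense in $\espacedual$ (one of the dense embeddings recalled above), its preimage $\esplambda$ is dense in $\espace$. The inclusion chain then needs only bookkeeping: $\esplambda\subset\espace$ is continuous since $\|\cdot\|_\espace\le\|\cdot\|_\esplambda$; the middle inclusions $\espace\subset\trace\subset L^2(\Gamma)\subset\traced\subset\espacedual$ are precisely the dense continuous embeddings already listed; and $\espacedual\subset\esplambda^*$ follows by dualising the dense continuous embedding $\esplambda\subset\espace$, the resulting map being injective exactly because $\esplambda$ is dense in $\espace$.

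Third, for the compactness I would first note that when $\vgamma$ embeds compactly, hence continuously, into $\trace$ one has $\espace=\vgamma\cap\trace=\vgamma$ with equivalent norms, so that $\espacedual=\vgamma^*$. The crucial observation is that, by Schauder's theorem, the adjoint of the compact inclusion $\vgamma\hookrightarrow\trace$ is the compact inclusion $\traced\hookrightarrow\vgamma^*=\espacedual$. Now take a bounded sequence $(u_n)$ in $\esplambda$: then $\|\Zimp^*u_n\|_{\traced}$ and $\|u_n\|_{\trace}$ remain bounded, so $(\Zimp+n_e)^*u_n=\Zimp^*u_n+n_e^*u_n$ is bounded in $\traced$; the compact embedding $\traced\hookrightarrow\espacedual$ provides a subsequence converging in $\espacedual$, and applying the continuous inverse $\big((\Zimp+n_e)^*\big)^{-1}$ transports this into convergence of $(u_n)$ in $\espace$. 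Hence $\esplambda\hookrightarrow\espace$ is compact.

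The step I expect to be the real obstacle is this last one. The naive idea of using the compact embedding $\vgamma\hookrightarrow\trace$ directly only yields convergence in $\trace$, which is strictly weaker than the convergence in $\espace=\vgamma$ that is demanded. The way around it is precisely the manoeuvre above: move to the dual side, where $\traced\hookrightarrow\espacedual$ is compact, and let the isomorphism $(\Zimp+n_e)^*$ carry the gained convergence back to $\espace$. Everything else reduces to the characterisation $\esplambda=\big((\Zimp+n_e)^*\big)^{-1}(\traced)$ together with the dense embeddings already at our disposal.
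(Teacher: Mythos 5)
Your proof is correct and follows essentially the same route as the paper's: both rest on the isomorphism $(\Zimp+n_e)^*:\espace\to\espacedual$, obtaining density by pulling the dense subspace $\traced$ (the paper uses $L^2(\Gamma)$) of $\espacedual$ back through the inverse, and obtaining compactness by bounding $(\Zimp+n_e)^*u_n$ in $\traced$, invoking the compact embedding $\traced\hookrightarrow\espacedual$, and applying the continuous inverse. Your explicit appeal to Schauder's theorem for that compact dual embedding merely makes precise a step the paper leaves implicit.
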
 
\noindent The proof of this lemma is straightforward and is given in the appendix
for readers' convenience.
\begin{proposition}
\label{extension}
 The operator $\Zimp+n_e$ can be extended to an isomorphism from $\trace$ into $\esplambda^*$.
\end{proposition}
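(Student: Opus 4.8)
The plan is to build the extension by transposition, realizing it as the Banach--space transpose of a suitable corestriction of the adjoint operator. Since $\Zimp+n_e:\espace\to\espacedual$ is an isomorphism (Corollary~\ref{existence}), its adjoint $(\Zimp+n_e)^*:\espace\to\espacedual$ is an isomorphism as well, and on $\espace$ it decomposes as $(\Zimp+n_e)^*=\Zimp^*+n_e^*$. The key structural remark I would record first is that $n_e^*$ maps $\trace$ boundedly into $\traced$, being the adjoint of the bounded map $n_e:\trace\to\traced$; hence for $u\in\espace$ one always has $n_e^*u\in\traced$, so that $(\Zimp+n_e)^*u=\Zimp^*u+n_e^*u$ lies in $\traced$ if and only if $\Zimp^*u\in\traced$. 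This identifies $\esplambda$ as the exact preimage $\esplambda=\{u\in\espace:\ (\Zimp+n_e)^*u\in\traced\}$.

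I would then show that the operator $B:=(\Zimp+n_e)^*$, regarded as a map from $\esplambda$ into $\traced$, is an isomorphism. Boundedness follows from $\|Bu\|_{\traced}\le\|\Zimp^*u\|_{\traced}+\|n_e^*\|\,\|u\|_{\trace}\le C\,|||u|||_{\esplambda}$. Injectivity is inherited from that of $(\Zimp+n_e)^*$ on $\espace$. For surjectivity, any $g\in\traced\subset\espacedual$ has a unique preimage $u\in\espace$ under $(\Zimp+n_e)^*$, and the preimage characterization of $\esplambda$ then forces $u\in\esplambda$, so $g=Bu$. Boundedness of $B^{-1}$ is in fact already contained in the norm-equivalence computation preceding Remark~\ref{rem:inclusion}: combining $\|u\|_{\trace}\le C\|Bu\|_{\traced}$ with $\|\Zimp^*u\|_{\traced}\le\|Bu\|_{\traced}+\|n_e^*\|\,\|u\|_{\trace}$ yields $|||u|||_{\esplambda}\le C\|Bu\|_{\traced}$, which bounds $B^{-1}$ (alternatively one simply invokes the open mapping theorem).

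Finally I would transpose $B$. As $\traced^*=\trace$ and $\esplambda$ is reflexive, the transpose $B'$ of $B$, characterized by $\langle B'w,u\rangle_{\esplambda^*,\esplambda}=\langle w,Bu\rangle_{\trace,\traced}$ for $w\in\trace$ and $u\in\esplambda$, is an isomorphism from $\trace$ into $\esplambda^*$, and I claim $B'$ is the desired extension. Indeed, for $w\in\espace$ and every $u\in\esplambda$ the duality pairings give $\langle B'w,u\rangle_{\esplambda^*,\esplambda}=\langle w,(\Zimp+n_e)^*u\rangle=\langle(\Zimp+n_e)w,u\rangle$, so $B'w=(\Zimp+n_e)w$ in $\esplambda^*$; since $\espace$ is dense in $\trace$, $B'$ is the unique bounded extension of $\Zimp+n_e$. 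The main obstacle is the careful bookkeeping of the nested duality pairings of Lemma~\ref{Lambdabien} --- in particular justifying the decomposition $(\Zimp+n_e)^*=\Zimp^*+n_e^*$ and the identity $\langle w,(\Zimp+n_e)^*u\rangle_{\trace,\traced}=\langle(\Zimp+n_e)w,u\rangle_{\espacedual,\espace}$ across the compatible embeddings --- together with recognizing that, because $\Zimp$ cannot be applied to elements of $\trace\setminus\vgamma$, the extension exists only in this transposed, weak sense.
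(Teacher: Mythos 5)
Your proof follows essentially the same route as the paper's: identify $\esplambda$ as the preimage of $\traced$ under $(\Zimp+n_e)^*$, show that $(\Zimp+n_e)^*:\esplambda\to\traced$ is an isomorphism, and obtain the desired extension as its transpose (adjoint) mapping $\trace$ into $\esplambda^*$. The only difference is that you spell out details the paper leaves implicit --- the boundedness of $n_e^*:\trace\to\traced$ justifying the preimage identification, and the density argument verifying that the transpose genuinely extends $\Zimp+n_e$ --- which strengthens rather than changes the argument.
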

\begin{proof}
First, let us prove that  $(\Zimp+n_e)^*$ is an  isomorphism from $\esplambda$ into $\traced$. By definition of $\esplambda$ we have:
\begin{align}
 \esplambda&=\left\{ u \in \espace ,\ \Zimp^*u \in \traced \right\}, \notag\\
	&=\left\{ u \in \espace,\ (\Zimp+n_e)^*u \in \traced \right\},\notag\\
\label{eq:ensemble}
&=\left((\Zimp+n_e)^*\right)^{-1}\left(\traced\right).
\end{align}
Since $(\Zimp+n_e)^*$ is assumed to be an isomorphism from $\espace$ into $\espacedual$ and $\traced \subset \espacedual$, \eqref{eq:ensemble} implies that $(\Zimp+n_e)^*:\esplambda \mapsto \traced$ is bijective.  By definition of the norm on  $\esplambda$ we get the continuity of $(\Zimp+n_e)^*$ defined from $\esplambda$ into $\traced$.  Then $(\Zimp+n_e)^*:\esplambda \mapsto \traced$ is an isomorphism and its adjoint $(\Zimp+n_e):\trace \mapsto \esplambda^*$ is also an isomorphism.
\end{proof}
Then the following Lemma is straightforward and completes the factorization of $F$.
\begin{lemma}
\label{LeT}
The operators $G$ and $T$ have the following properties,
\begin{itemize}
\item the operator $G$ is bounded from $\esplambda^*$ to $L^2(S^{d-1})$,
\item the operator $T$ is bounded from $\esplambda$ to $\esplambda^*$.
\end{itemize}
\end{lemma}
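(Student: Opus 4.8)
The plan is to establish the two boundedness statements separately, using the extension result just proved (Proposition \ref{extension}) and the explicit definition of $T$ together with the mapping properties of the layer potentials.

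For the first item, I would argue as follows. The operator $G$ was defined by $Gf := u^\infty$, where $u^\infty$ is the far field of the solution $u_s$ to \eqref{pbdiff} with right-hand side $f$. The well-posedness of \eqref{pbdiff} established in Corollary \ref{existence} shows that $G$ is bounded from $\espacedual$ into $L^2(S^{d-1})$, since $f \mapsto u_s$ is continuous into $V(\Gamma)$ (hence the far field depends continuously on $f$). Because $\espacedual \subset \esplambda^*$ with continuous embedding (by Lemma \ref{Lambdabien}), one would want $G$ bounded on the larger space $\esplambda^*$; the key point is that the solution operator for \eqref{pbdiff} itself extends continuously to data in $\esplambda^*$. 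This follows from Proposition \ref{extension}, which provides that $\Zimp + n_e$ is an isomorphism from $\trace$ onto $\esplambda^*$: via the surface reformulation of Lemma \ref{lem:reformulation}, the solution trace $u_\Gamma = (\Zimp+n_e)^{-1}f$ depends continuously on $f \in \esplambda^*$ with values in $\trace$, and the far field map $u_\Gamma \mapsto u^\infty$ is bounded from $\trace$ into $L^2(S^{d-1})$ by standard far-field estimates. Composing gives boundedness of $G$ from $\esplambda^*$ to $L^2(S^{d-1})$.

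For the second item, recall
\[
T = \Zimp \traceps \Zimp^* + \normalepd + \Zimp \tracepd + \normaleps \Zimp^*.
\]
The strategy is to track each term through the scale of spaces. The single and double layer potential traces $\traceps, \tracepd, \normaleps, \normalepd$ are standard surface operators with the mapping properties $\traceps : \sobolev{-\frac12} \to \sobolev{\frac12}$, $\tracepd, \normaleps : \sobolev{-\frac12} \to \sobolev{\frac12}$ (up to the appropriate jump conventions), and $\normalepd : \sobolev{\frac12} \to \sobolev{-\frac12}$. For $u \in \esplambda$, the definition of the space guarantees $\Zimp^* u \in \sobolev{-\frac12}$, so $\traceps \Zimp^* u \in \sobolev{\frac12}$; applying $\Zimp : \vgamma \to \vgamma^*$ then lands in $\vgamma^* \subset \esplambda^*$ using the embedding chain of Lemma \ref{Lambdabien}. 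The remaining three terms are handled in the same fashion: each maps $\esplambda$ into $\sobolev{-\frac12} \subset \esplambda^*$ or into $\vgamma^* \subset \esplambda^*$, using that $\esplambda \subset \espace \subset \sobolev{\frac12}$ on the one hand and the defining property $\Zimp^* : \esplambda \to \sobolev{-\frac12}$ on the other.

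The main obstacle, and the reason the space $\esplambda$ was introduced in the first place, is precisely the first term $\Zimp \traceps \Zimp^*$: without the restriction to $\esplambda$, the factor $\Zimp^*$ need not map into $\sobolev{-\frac12}$ (as the discussion preceding the definition of $\esplambda$ notes for $\Zimp = \Delta_\Gamma$), so $\traceps \Zimp^*$ would not be well-defined on all of $\espace$. Once the mapping property $\Zimp^* : \esplambda \to \sobolev{-\frac12}$ is in hand from the definition of the space, the composition is controlled, and the norm estimate follows from the equivalence of $\|\cdot\|_\esplambda$ with $|||\cdot|||_\esplambda$ established above, which bundles together exactly the two quantities $\|\cdot\|_\trace$ and $\|\Zimp^*\cdot\|_\traced$ that the four terms of $T$ require. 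This is why the lemma is, as the authors say, straightforward given the groundwork.
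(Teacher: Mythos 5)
Your first bullet is handled essentially as in the paper: you compose the isomorphism $(\Zimp+n_e)^{-1}:\esplambda^*\mapsto\trace$ from Proposition \ref{extension} with the bounded far-field map of the exterior Dirichlet problem, which is precisely the paper's factorization $G=A^{\infty}(\Zimp+n_e)^{-1}$ of \eqref{factoG}. No issue there.

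The second bullet, however, has a genuine gap, and it sits exactly at the point the lemma is really about: the outer factor $\Zimp$ in the terms $\Zimp\traceps\Zimp^*$ and $\Zimp\tracepd$. You apply $\Zimp$ \emph{as an operator from $\vgamma$ into $\vgamma^*$} to the function $\traceps\Zimp^*u$ (resp. $\tracepd u$), which you have only shown to lie in $\sobolev{\frac{1}{2}}=\trace$. This requires $\trace\subset\vgamma$, which fails in general and in particular in the motivating examples: for $\Zimp=\Delta_\Gamma$ one has $\vgamma=H^1(\Gamma)$, and for the operator of Remark \ref{sec:exemple_application} one has $\vgamma=\sobolev{2}$; in both cases $\vgamma$ is a strict subspace of $\trace$, and $\Delta_\Gamma$ of a generic $\sobolev{\frac{1}{2}}$ function is not an element of $\vgamma^*=H^{-1}(\Gamma)$. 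So the composition you write down is undefined precisely in the situation the space $\esplambda$ was designed for (the case where $\vgamma$ is compactly embedded in $\trace$). The paper's fix is a duality argument: by the very definition of $\esplambda$ and of its norm, $\Zimp^*:\esplambda\mapsto\traced$ is bounded, hence its adjoint is a bounded operator from $\trace$ into $\esplambda^*$ which extends $\Zimp$; it is this extension that one applies to $\traceps\Zimp^*u\in\trace$ and to $\tracepd u\in\trace$, landing directly in $\esplambda^*$ (no detour through $\vgamma^*$). Without this step your argument cannot be repaired. A secondary inaccuracy: $\normaleps=\gamma_1\SL$ does not map $\sobolev{-\frac{1}{2}}$ into $\sobolev{\frac{1}{2}}$; because of the jump relation it maps $\sobolev{-\frac{1}{2}}$ boundedly into itself (and likewise $\tracepd$ maps $\trace$ into itself on a Lipschitz boundary). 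This one is harmless --- the inclusion $\sobolev{-\frac{1}{2}}\subset\esplambda^*$ is all you need for the terms $\normaleps\Zimp^*$ and $\normalepd$ --- but the order-one smoothing you quote holds only for smooth boundaries, not in the Lipschitz setting of the paper.
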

\begin{proof}
Let us introduce the bounded operator $A^{\infty}:\trace \mapsto L^2(S^{d-1})$ defined by $A^{\infty}h=u^{\infty}$ where $u^{\infty}$ is the far field pattern associated with the scattered field $u_h \in \hloc$ solution to the Dirichlet problem \eqref{dirclassique}.  We have: 
\begin{equation}
\label{factoG}
 G=A^{\infty}(\Zimp+n_e)^{-1}
\end{equation}
and Proposition \ref{extension} proves the first point. 

Let us recall the definition of $T$:
\begin{equation*}
  T=\Zimp\traceps \Zimp^*+\normalepd + \Zimp \tracepd +\normaleps \Zimp^*.
\end{equation*}
The definition of $\esplambda$ implies that $\Zimp:\trace \mapsto \esplambda^*$ and $\Zimp^*:\esplambda \mapsto \traced$ are two bounded operators. Thus using that $\traceps:\traced \mapsto \trace$, $\normalepd:\trace \mapsto \traced$, $\tracepd:\traced \mapsto \trace$ and $\normaleps:\traced \mapsto \traced$ are bounded (see \cite[chapter 7]{McL00}) we obtain the second point. 
\end{proof}
We show in the next section how to use the factorization \eqref{eq:facto} to determine the obstacle $D$.

\subsection{Characterization of the obstacle using $F_\sharp$}
\label{sec:equi}
First of all, $D$ is characterized by the range of $G$ as stated in the next Lemma. 
\begin{lemma}
\label{LeImG}
Let $\phi_z \in L^2(S^{d-1})$ be defined for $(z,\hat{x}) \in  \mathbb{R}^d \times S^{d-1}$ by $\phi_z(\hat{x})=e^{-ik\hat{x}z}$  
then we have
\begin{equation*}
  z \in D \Longleftrightarrow \phi_z \in \mathcal{R}(G).
\end{equation*}
\label{rangphiz}
\end{lemma}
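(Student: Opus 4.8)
The plan is to reduce the claim to a statement about the range of the Dirichlet far field operator $A^{\infty}$ and then to test with the radiating point source placed at $z$, using Rellich's lemma together with unique continuation.

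First I would observe, from the factorization $G=A^{\infty}(\Zimp+n_e)^{-1}$ in \eqref{factoG} together with Proposition \ref{extension} which asserts that $(\Zimp+n_e)^{-1}$ is an isomorphism from $\esplambda^{*}$ onto $\trace$, that $\mc{R}(G)=A^{\infty}(\trace)=\mc{R}(A^{\infty})$. Hence it suffices to prove the equivalence $z\in D\Leftrightarrow\phi_z\in\mc{R}(A^{\infty})$, i.e. to characterize those $z$ for which $\phi_z$ is the far field of the outgoing solution $u_h$ of the exterior Dirichlet problem \eqref{dirclassique} for some admissible boundary datum $h\in\trace$. The reference object throughout is the radiating fundamental solution $\Phi_z:=G_k(\cdot-z)$, whose far field pattern is exactly $\phi_z$: a direct computation from the large-argument asymptotics of $G_k$ shows that the normalizing constant $\gamma(d)$ cancels in both dimensions.

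For the direct implication I would take $z\in D$. Then $\Phi_z$ is a radiating solution of the Helmholtz equation in $\Om$ that is smooth up to $\Gamma$, so $h:=\gamma_0\Phi_z\in\trace$ is admissible. By uniqueness of the exterior Dirichlet problem \eqref{dirclassique} the solution $u_h$ coincides with $\Phi_z$ in $\Om$, hence has far field $\phi_z$, which yields $\phi_z=A^{\infty}h\in\mc{R}(A^{\infty})=\mc{R}(G)$. For the converse I would argue by contraposition and suppose $\phi_z=A^{\infty}h$ for some $h\in\trace$, so that $u_h$ and $\Phi_z$ are two radiating solutions of the Helmholtz equation sharing the same far field $\phi_z$. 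Rellich's lemma forces $u_h=\Phi_z$ in the unbounded component exterior to a large ball containing $\overline{D}\cup\{z\}$, and since $\Om$ is connected, unique continuation propagates the equality to all of $\Om\setminus\{z\}$. If $z\in\Om$ this is impossible, since $u_h\in\hloc$ has finite local energy near $z$ whereas $\Phi_z$ has a singularity there that is not in $H^1$ (logarithmic when $d=2$, of order $|x-z|^{-1}$ when $d=3$); hence $z\in\overline{D}$. To exclude $z\in\Gamma$ I would use the same energy obstruction localized at the boundary: the equality $u_h=\Phi_z$ on $\Om$ would make $u_h$ fail the finite-energy and $\trace$-trace regularity in a neighborhood of $z\in\Gamma$, contradicting $u_h\in\hloc$ with $\gamma_0u_h=h\in\trace$. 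Combining the two cases gives $z\in\overline{D}\setminus\Gamma=D$.

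The main obstacle is the boundary case $z\in\Gamma$: one must convert the pointwise singularity of $\Phi_z$ at a boundary point into a genuine contradiction with the finite-energy and trace regularity of $u_h$, which requires a careful local estimate of $\|\Phi_z\|_{H^1(\Om\cap B_\varepsilon(z))}$ rather than the clean interior argument available when $z\in\Om$. The second delicate point is the globalization step, where connectedness of $\Om$ must be invoked explicitly to pass from the Rellich identity in the exterior of a ball to the equality $u_h=\Phi_z$ on all of $\Om\setminus\{z\}$ by unique continuation.
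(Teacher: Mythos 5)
Your proposal is correct, and its analytic core coincides with the paper's: both converse arguments rest on identifying $\phi_z$ as the far field of the point source $G_k(\cdot-z)$, applying Rellich's lemma and unique continuation on the connected set $\Om\setminus\{z\}$, and concluding from the fact that $G_k(\cdot-z)$ fails to be locally $H^1$ near $z$. The genuine structural difference is your front-end reduction: using the factorization \eqref{factoG} and Proposition \ref{extension} you replace $\mathcal{R}(G)$ by $\mathcal{R}(A^{\infty})$ and then work entirely with the exterior Dirichlet problem \eqref{dirclassique}, whereas the paper argues directly on $G$, exhibiting for $z\in D$ the explicit impedance preimage $(\Zimp+\partial_\nu)G_k(\cdot-z)$ and, for the converse, assuming $\phi_z=Gf$ with $f\in\esplambda^*$. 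Your reduction buys a cleaner forward direction --- the Dirichlet datum $\gamma_0 G_k(\cdot-z)\in\trace$ is manifestly admissible, while the paper's preimage tacitly requires checking that $(\Zimp+\partial_\nu)G_k(\cdot-z)$ indeed lies in $\esplambda^*$ --- and it isolates exactly the statement $z\in D\Leftrightarrow\phi_z\in\mathcal{R}(A^{\infty})$ that the paper invokes later (Remark \ref{re:dipole}) to justify dipole test functions, so that remark becomes immediate in your setting; what the paper's direct route buys is only brevity and self-containment within the impedance framework. You are also more scrupulous than the paper about the case $z\in\Gamma$: the published proof does not separate it, while your flagged local estimate $G_k(\cdot-z)\notin H^1\bigl(\Om\cap B_\varepsilon(z)\bigr)$ is what is actually needed there, and it does hold because the Lipschitz domain $\Om$ contains a truncated cone with vertex $z$, on which $\int |\nabla G_k(x-z)|^2\,dx$ diverges for $d=2,3$.
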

\begin{proof}
 If $z\in D$, then for $f(x)=G_k(x-z)$ we have 
 \[
 \phi_z = -G[ (\Zimp + \partial_\nu) f]
 \] 
 since $\phi_z$ is the far field of the outgoing solution $G_k(x-z)$ to the Helmholtz equation outside $D$.  Hence $\phi_z \in \mc{R}(G)$.
 
 Conversely, assume that $z \not \in D$ and let us prove by contradiction that $\phi_z \not \in \mc{R}(G)$.  Assume that there exists $f \in \esplambda^*$ such that
 \[
 \phi_z = Gf.
 \]
 Then the solution to \eqref{pbdiff} coincides with the near field associated
 with $\phi_z$, which is $G_k(x-z)$, in the domain $\Om \setminus \{z\}$. This
 contradicts the singular behavior of $G_k$ at point $0$ that prevents this
 function to be locally $H^1$ in the neighborhood of $z$. 
 \end{proof}

Now we will prove that the factorization \eqref{eq:facto} satisfies the
hypothesis of the Factorization Theorem (see \cite[Theorem 2.15]{kirsch2008factorization}). We mainly have to prove that $G$ is injective with dense range, that $-T^*$ satisfied good sign properties and that its imaginary part is strictly positive.
\begin{lemma}
	\label{Hcomp}
  The operator $G:\esplambda^*\mapsto L^2\left(S^{d-1}\right) $ is injective and compact with dense range.
\end{lemma}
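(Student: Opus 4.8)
The plan is to deduce all three properties from the factorization \eqref{factoG}, namely $G=A^{\infty}(\Zimp+n_e)^{-1}$, combined with Proposition \ref{extension}, which guarantees that $(\Zimp+n_e)^{-1}$ is an isomorphism from $\esplambda^*$ onto $\trace$. Writing $G=A^{\infty}\circ J$ with $J:=(\Zimp+n_e)^{-1}$ an isomorphism, one has $\ker G=J^{-1}(\ker A^{\infty})$ and $\mathcal{R}(G)=A^{\infty}(\trace)=\mathcal{R}(A^{\infty})$, while composition with a bounded bijection does not affect compactness. Hence it suffices to establish that $A^{\infty}:\trace\mapsto L^2(S^{d-1})$ is injective, compact and has dense range, where I recall that $A^{\infty}h$ is the far field pattern of the radiating solution $u_h\in\hloc$ of the exterior Dirichlet problem \eqref{dirclassique}.

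For compactness, I would fix a ball $B_r$ with $\overline{D}\subset B_r$, so that $\partial B_r\subset\Om$. The solution operator $h\mapsto u_h$ is bounded from $\trace$ into $H^1$ of a neighbourhood of $\partial B_r$, and by interior (indeed analytic) elliptic regularity $u_h$ is smooth near $\partial B_r$, with Cauchy data on $\partial B_r$ depending continuously on $h$. Expressing the far field through the representation formula over $\partial B_r$, whose kernel $\hat x\mapsto e^{-ik\hat x\cdot y}$ is analytic, exhibits $A^{\infty}$ as the composition of a bounded operator valued in the Cauchy data on $\partial B_r$ with a smoothing integral operator; the latter maps into $H^s(S^{d-1})$ for every $s$, and the compact embedding $H^s(S^{d-1})\hookrightarrow L^2(S^{d-1})$ yields compactness of $A^{\infty}$, hence of $G$. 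Injectivity is then immediate from Rellich's lemma: if $A^{\infty}h=0$ the radiating solution $u_h$ has vanishing far field, so $u_h=0$ outside $B_r$ and, by unique continuation, $u_h\equiv0$ in $\Om$, whence $h=\gamma_0 u_h=0$.

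For the dense range I would argue directly on $G$ using Lemma \ref{rangphiz}: for every $z\in D$ the test function $\phi_z$ belongs to $\mathcal{R}(G)$. Hence if $g\in L^2(S^{d-1})$ is orthogonal to $\mathcal{R}(G)$, then for all $z\in D$ one has $0=\langle\phi_z,g\rangle_{L^2(S^{d-1})}=\overline{v_g(z)}$, where $v_g$ is the Herglotz wave function. Since $v_g$ is an entire solution of the Helmholtz equation vanishing on the open set $D$, it vanishes identically by unique continuation, and injectivity of the Herglotz operator forces $g=0$; thus $\mathcal{R}(G)$ is dense.

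The step I expect to be the most delicate is the compactness argument, as it relies on the interior regularity of $u_h$ away from $\Gamma$ and on recasting the far field map as a smoothing operator through an enclosing sphere; by contrast injectivity and density reduce to Rellich's lemma and to the injectivity of the Herglotz operator, both of which are classical and can be invoked directly.
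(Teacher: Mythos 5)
Your proof is correct, and it shares the paper's skeleton: both start from the factorization $G=A^{\infty}(\Zimp+n_e)^{-1}$ of \eqref{factoG} together with Proposition \ref{extension}, which makes $(\Zimp+n_e)^{-1}$ an isomorphism from $\esplambda^*$ onto $\trace$, so that everything reduces to properties of $A^{\infty}$ (or, for the range, of $G$ itself). The difference lies in what happens next: the paper simply cites \cite[Lemma 1.13]{kirsch2008factorization}, which states precisely that $A^{\infty}:\trace \mapsto L^2(S^{d-1})$ is injective and compact with dense range, and concludes in one line; you re-prove these facts instead. Your compactness argument (interior elliptic regularity of $u_h$ near an enclosing sphere $\partial B_r$, then the representation formula with analytic kernel, i.e.\ a smoothing integral operator composed with a bounded Cauchy-data map) and your injectivity argument (Rellich plus unique continuation, then $h=\gamma_0 u_h=0$) are exactly the standard proofs behind the cited lemma. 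Your dense-range argument, however, takes a genuinely different route: rather than transporting the dense range of $A^{\infty}$ through the isomorphism, you argue directly on $G$, using the inclusion $\phi_z\in\mathcal{R}(G)$ for $z\in D$ (the easy direction of Lemma \ref{LeImG}, which precedes this lemma in the paper, so the use is legitimate), the analyticity of the Herglotz wave function $v_g$ to propagate its vanishing from the open set $D$ to all of $\bR^d$, and the classical injectivity of the Herglotz operator to conclude $g=0$. What the paper's route buys is brevity and reliance on a standard reference; what yours buys is self-containedness, plus the observation that density of $\mathcal{R}(G)$ already follows from the test functions $\phi_z$ alone, without needing the isomorphism for that part of the statement.
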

\begin{proof}
Thanks to Proposition \ref{extension}, $(\Zimp+n_e)^{-1}$ is an isomorphism from $\esplambda^*$ into $\trace$.
Moreover from \cite[Lemma 1.13]{kirsch2008factorization} we get that $A^\infty:\trace \mapsto L^2(S^{d-1})$ is injective and compact with dense range. 
Then using factorization \eqref{factoG} we obtain that  $G:\esplambda^*\mapsto L^2\left(S^{d-1}\right)$  is also injective compact with dense range. 
\end{proof}

\begin{lemma}
  \label{coerT}
If one of the two assumptions is fulfilled:
\begin{enumerate}
\item the space $\vgamma$ is compactly embedded into $\trace$,
\item the space $\trace$ is compactly embedded into $\vgamma$,
\end{enumerate}
then the real part of $r T$ writes as 
\begin{equation*}
  C_T+K_T,
\end{equation*}
with $r=1$ if assumption (i) is fulfilled and $r=-1$ if assumption (ii) is fulfilled where $C_T:\esplambda \mapsto \esplambda^*$
is a self-adjoint and coercive operator i.e.
 \begin{equation*}
 \exists c_T>0, \ \langle C_T x,x \rangle \geq c_T\| x\|^2_{\esplambda} \ \forall x \in \esplambda.
 \end{equation*}
Moreover $K_T:\esplambda \mapsto \esplambda^*$ and $\Im(T):\esplambda \mapsto \esplambda^*$ are compact operators.
\end{lemma}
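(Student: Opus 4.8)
The plan is to read off everything from the explicit expression
\[
T=\Zimp\traceps\Zimp^*+\normalepd+\Zimp\tracepd+\normaleps\Zimp^* ,
\]
and to show that, modulo operators that are compact from $\esplambda$ into $\esplambda^*$, only one of its four terms survives, this surviving term being coercive with the sign prescribed by $r$. Throughout I write $A\simeq B$ to mean that $A-B$ is compact from $\esplambda$ into $\esplambda^*$. I would establish the three assertions in turn: the compactness of $\Im(T)$, the reduction of $T$ to a single coercive operator in each of the two cases, and finally the repackaging into $C_T+K_T$.

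For the compactness of $\Im(T)$ I would use that the four boundary integral operators are symmetric up to smoothing. Their kernels are built from $G_{k}$ and its normal derivatives, and since $G_{k}-\overline{G_{k}}=2i\,\Im G_{k}$ is a smooth function (in both dimensions $\Im G_{k}$ extends smoothly across the diagonal), each operator differs from its $L^2$-adjoint by an infinitely smoothing, hence compact, operator; concretely $\traceps^*\simeq\traceps$, $\normalepd^*\simeq\normalepd$, and $\tracepd^*\simeq\normaleps$, $\normaleps^*\simeq\tracepd$. Taking the adjoint of $T$ term by term and inserting these relations gives $T^*\simeq T$, the remainders being smoothing operators pre- and post-composed with $\Zimp^*:\esplambda\mapsto\traced$ and $\Zimp:\trace\mapsto\esplambda^*$ (bounded by Lemma \ref{LeT}); since a smoothing operator sends $\traced$ compactly into $\trace$, these conjugates stay compact. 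Hence $\Im(T)=\tfrac{1}{2i}(T-T^*)$ is compact, and because $\Re(T)=\tfrac12(T+T^*)\simeq T$, it is enough to analyse the coercivity of $rT$ itself.

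The heart of the argument is the case distinction. Under assumption (i), Lemma \ref{Lambdabien} gives that $\esplambda$ embeds \emph{compactly} into $\espace=\vgamma\subset\trace$; this compact embedding (and its dual $\traced\hookrightarrow\esplambda^*$) makes $\normalepd$, $\Zimp\tracepd$ and $\normaleps\Zimp^*$ compact from $\esplambda$ into $\esplambda^*$, so that $T\simeq\Zimp\traceps\Zimp^*$. A Green's formula for the single layer potential, combined with the radiation condition and Rellich's lemma as in Lemma \ref{uniqueness_lem}, shows that the single layer is positive up to compact, $\Re\langle\traceps\phi,\phi\rangle\geq c\|\phi\|^2_{\traced}$; with $\phi=\Zimp^*x$ this yields $\Re\langle Tx,x\rangle\geq c\|\Zimp^*x\|^2_{\traced}$ modulo compact. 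Since the $\esplambda$-norm is equivalent to $(\|x\|^2_{\trace}+\|\Zimp^*x\|^2_{\traced})^{1/2}$ and the first term is represented by a compact operator from $\esplambda$ into $\esplambda^*$ (again by $\esplambda\hookrightarrow\trace$ compact), this upgrades to $\Re\langle Tx,x\rangle\geq c_T\|x\|^2_\esplambda$ modulo compact, i.e. $r=1$. Under assumption (ii) the situation is dual: $\trace\hookrightarrow\vgamma$ compactly forces $\Zimp$ and $\Zimp^*$ to be compact from $\trace$ into $\traced$, so $\Zimp\traceps\Zimp^*$, $\Zimp\tracepd$ and $\normaleps\Zimp^*$ are compact and $T\simeq\normalepd$, while Remark \ref{rem:inclusion} identifies $\esplambda=\trace$. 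A Green's formula applied now to the double layer potential shows that the hypersingular operator is \emph{negative} up to compact, $-\Re\langle\normalepd q,q\rangle\geq c\|q\|^2_{\trace}$, which is exactly coercivity of $-T$, i.e. $r=-1$.

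In both cases I have reached an estimate of the form $\Re\langle rTx,x\rangle\geq c\|x\|^2_\esplambda-\langle\widetilde K x,x\rangle$ for some self-adjoint compact $\widetilde K:\esplambda\mapsto\esplambda^*$ (one may take the symmetric part of any compact remainder). It then suffices to set $C_T:=\Re(rT)+\widetilde K$ and $K_T:=-\widetilde K$: then $\Re(rT)=C_T+K_T$ with $K_T$ compact, $C_T$ is self-adjoint as a sum of self-adjoint operators, and $\langle C_Tx,x\rangle\geq c\|x\|^2_\esplambda$ by construction. I expect the main obstacle to be the two sign statements for the layer operators—single layer positive, hypersingular negative, both only up to compact—which have to be extracted carefully from the Green identities and the radiation condition, together with the bookkeeping that keeps every smoothing remainder compact after conjugation by the operators $\Zimp,\Zimp^*$ (whose presence rules out a purely symbolic argument). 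That exactly one term survives in each regime, with the sign matching the prescribed $r$, is precisely what produces the dichotomy order $<1$ versus order $>1$ and the breakdown at order one.
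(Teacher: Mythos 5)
Your proof is correct, and its skeleton is the same as the paper's: use the compact embedding (Lemma \ref{Lambdabien} in case (i), Remark \ref{rem:inclusion} in case (ii)) to push terms of $T$ into a compact remainder, and invoke the standard coercivity of the single-layer / negative hypersingular operators to make the surviving part coercive with the sign $r$. Two execution choices differ, both legitimate. First, in case (i) the paper does \emph{not} discard $\normalepd$: it sets $C_T:=\Zimp\tracepsi\Zimp^*-\normalepdi$, built from the exactly coercive operators at wavenumber $i$ cited from Kirsch's book, so that the two summands control respectively $\|\Zimp^*x\|^2_{\traced}$ and $\|x\|^2_{\trace}$, i.e.\ the full equivalent $\esplambda$-norm; the compact remainder then contains $\normalepd+\normalepdi$, which is compact only thanks to the embedding $\esplambda\hookrightarrow\trace$. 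You instead keep only $\Zimp\traceps\Zimp^*$, which controls only $\|\Zimp^*x\|^2_{\traced}$, and you restore the missing $\|x\|^2_{\trace}$ via the observation that the $\trace$-inner product defines a compact self-adjoint operator from $\esplambda$ into $\esplambda^*$ --- the same embedding used on the other side of the inequality, so the two devices are equivalent. Second, for $\Im(T)$ the paper gets compactness for free: its $C_T$ is self-adjoint, hence $\Im(T)=\Im(\mathcal{K})$ with $\mathcal{K}$ the compact remainder; you instead prove directly that $T-T^*$ is compact from the kernel symmetries of the layer potentials ($\Im G_k$ being smooth across the diagonal), which is slightly stronger (it needs neither assumption (i) nor (ii)) at the price of bookkeeping the smoothing remainders conjugated by $\Zimp$ and $\Zimp^*$. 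Your final abstract repackaging, $C_T:=\Re(rT)+\widetilde K$ and $K_T:=-\widetilde K$ with $\widetilde K$ the symmetrized compact remainder, is valid and replaces the paper's explicit formulas for $C_T$ and $K_T$.
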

\begin{proof}
The operators  $\tracepsi:\traced \mapsto \trace$ and $-\normalepdi:\trace \mapsto \traced$ are self-adjoint and coercive (see proofs of Lemma 1.14 and Theorem 1.26 of  \cite{ kirsch2008factorization}), i.e., there exists two strictly positive constants $c_{\SLI}$ and $c_{\DLI}$ such that:
\begin{equation*}
  \begin{cases}
    \langle \tracepsi x,x \rangle \geq c_{\SLI}\| x\|^2_{\traced} \ \forall x \in \traced,\\
    \langle -\normalepdi x,x \rangle \geq c_{\DLI}\| x\|^2_{\trace} \ \forall x \in \trace.
  \end{cases}
\end{equation*}

Assume that $\vgamma$ is compactly embedded into $\trace$. From the coercivity results on $\tracepsi$ and $\normalepdi$, we get that
 $C_T:=\Zimp \tracepsi \Zimp^*-\normalepdi:\esplambda\mapsto \esplambda^*$ is self-adjoint and coercive.
Now let us prove that:
\begin{equation*}
\mathcal{K}:=  \Zimp( \traceps-	\tracepsi) \Zimp^* +\normalepd+\normalepdi +\Zimp \tracepd +\normaleps \Zimp^*,
\end{equation*}
is compact as an operator from $\esplambda$ onto $\esplambda^*$. Since $(\traceps-\tracepsi) : \traced \mapsto \trace$ is compact we obtain  that $\Zimp (\traceps-\tracepsi)\Zimp^*:\esplambda \mapsto \esplambda^*$ is compact.
The operators
\begin{equation*}
  \begin{cases}
    \normalepd+\normalepdi:\trace \mapsto \traced,\\
    \Zimp \tracepd:\trace \mapsto \esplambda^*,\\
    \normaleps \Zimp^*:\esplambda \mapsto \traced,
  \end{cases}
\end{equation*}
are bounded operators and using that $\esplambda$ is compactly embedded into $\trace$ (Lemma \ref{Lambdabien}) we can conclude that $\mathcal{K} : \esplambda\mapsto \esplambda^*$ is compact and so are its imaginary and real parts.

Now assume that $\trace$ is compactly embedded into $\vgamma$. Thanks to Remark \ref{rem:inclusion} we have that $\esplambda=\trace$ and then $\Zimp,\Zimp^*:\trace \mapsto \traced$ are compact operator.
Define $C_T=-\normalepdi$, we recall that this operator is self adjoints and coercive and that $\tracepd:\trace \mapsto \trace$ and $\normaleps: \traced \mapsto \traced$ are bounded operator.
Furthermore $\normalepd-\normalepdi:\trace \mapsto \traced$ is compact 
 and then the operator
\begin{equation*}
  \mathcal{K}:=\Zimp \traceps \Zimp^* +\Zimp \tracepd + \normaleps \Zimp^*+\normalepd-\normalepdi,
 \end{equation*}
 is compact as an operator from $\trace$ onto $\traced$ and thus the same goes for its real and imaginary parts.
 \end{proof}
We still need to prove that the imaginary part of $-T^*$ is strictly positive on  $\overline{\mathcal{R}(G^*)}$. To do so, we have to avoid  some special values of $k$ for which this may not be true.
\begin{definition}
 We say that  $k^2$ is an eigenvalue of $- \Delta$ associated with the impedance operator 
$\Zimp$ if there is a nonzero  $ u \in \{ v \in H^1(D) ,\ \gamma_0 v \in \espace \}$ solution to
\begin{equation}
\label{eq:eigenX}
\begin{cases}
 \Delta u + k^2 u=0 \text{ in } D, \\
\pn u+\Zimp u=0 \text{ on } \Gamma.
\end{cases}
\end{equation}
\end{definition}
\begin{remark}
We remark  that when $\Im(\Zimp) \ : \  \espace \mapsto \espacedual $ is positive definite then there is no real eigenvalue $k^2$ associated with the impedance operator $\Zimp$. Actually, any solution $u \in  \{ v \in H^1(D) ,\ \gamma_0 v \in \espace \}$ to \eqref{eq:eigenX} for $k^2 \in \bR$ satisfies
\[
\langle \Im (\Zimp) u,u \rangle =0
\]
which implies $u=0$ when $\Im(\Zimp)$ is positive definite.
\end{remark}

\begin{lemma}
\label{posT}
If $k^2$ is not an eigenvalue of $- \Delta$ associated with the impedance operator 
$\Zimp^*$ then
$-\Im(T^{*})$ is strictly positive on $\overline{\mathcal{R}(G^*)}$.
\end{lemma}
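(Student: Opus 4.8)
The plan is to reduce the statement to a positivity estimate for the sesquilinear form $q\mapsto\Im\langle Tq,q\rangle$ and then to read off strict positivity from a Rellich-type argument. First I would record two elementary identities. From the definition \eqref{defreim} and $(T^{*})^{*}=T$ one gets $-\Im(T^{*})=\Im(T)$, and since $\langle T^{*}q,q\rangle=\overline{\langle Tq,q\rangle}$ one has $\langle \Im(T)q,q\rangle=\Im\langle Tq,q\rangle$. Hence it suffices to prove that $\Im\langle Tq,q\rangle>0$ for every $q\in\overline{\mathcal{R}(G^{*})}\setminus\{0\}$. Throughout, such $q$ lies in $\esplambda$, so that $q\in\sobolev{\frac{1}{2}}$ and $\Zimp^{*}q\in\sobolev{-\frac{1}{2}}$.

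The central object is the combined potential already used in \eqref{eq:defv}, namely $w:=\SL(\Zimp^{*}q)+\DL(q)$, which is an outgoing solution of the Helmholtz equation in $\bR^{d}\setminus\Gamma$, with $w\in H^{1}_{loc}(\bR^{d}\setminus\Gamma)$ and $w|_{\Om}$ outgoing. Denote by $\gamma_0^{e},\gamma_1^{e}$ (resp. $\gamma_0^{i},\gamma_1^{i}$) the Dirichlet and Neumann traces on $\Gamma$ taken from $\Om$ (resp. from $D$). The classical jump relations for the single and double layer potentials give $\gamma_0^{e}w-\gamma_0^{i}w=q$ and $\gamma_1^{e}w-\gamma_1^{i}w=-\Zimp^{*}q$, while, unwinding the definition of the four boundary operators $\traceps,\tracepd,\normaleps,\normalepd$ as traces from $\Om$, one checks that $Tq=\Zimp\gamma_0^{e}w+\gamma_1^{e}w$, i.e. $Tq$ is exactly the exterior impedance trace of $w$.

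Next I would derive the key identity by two Green formulas. Applying Green's formula in $D$ gives $\Im\langle\gamma_1^{i}w,\gamma_0^{i}w\rangle=0$, since $w|_{D}\in H^{1}(D)$ solves the Helmholtz equation. Applying it in $\Om_r:=B_r\setminus\overline{D}$, then letting $r\to\infty$ using the radiation condition and the far field asymptotics, gives $\Im\langle\gamma_1^{e}w,\gamma_0^{e}w\rangle=c\,\|w^{\infty}\|^{2}_{L^{2}(S^{d-1})}$ with $c>0$, where $w^{\infty}$ is the far field of $w$. Substituting the jump relations into $\langle Tq,q\rangle=\langle\Zimp\gamma_0^{e}w+\gamma_1^{e}w,\,\gamma_0^{e}w-\gamma_0^{i}w\rangle$ and taking imaginary parts, the cross terms between interior and exterior data reorganize — using $\Zimp-\Zimp^{*}=2i\,\Im(\Zimp)$ — into the single boundary contribution $\langle\Im(\Zimp)q,q\rangle$, leaving
\begin{equation*}
\Im\langle Tq,q\rangle=c\,\|w^{\infty}\|^{2}_{L^{2}(S^{d-1})}+\langle\Im(\Zimp)q,q\rangle .
\end{equation*}
By the sign hypothesis \eqref{H1} and $c>0$, both terms are nonnegative, so $\Im\langle Tq,q\rangle\geq0$. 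I expect this reorganization of the boundary cross-terms to be the delicate, bookkeeping-heavy step, and the place where the duality pairings, the adjoint convention \eqref{defreim} and the signs in the jump relations must be handled with care.

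Finally, for strict positivity suppose $\Im\langle Tq,q\rangle=0$. Then both nonnegative terms vanish, in particular $w^{\infty}=0$; by Rellich's lemma together with unique continuation and the connectedness of $\Om$, this forces $w=0$ in $\Om$, hence $\gamma_0^{e}w=\gamma_1^{e}w=0$. The jump relations then give $\gamma_0^{i}w=-q$ and $\gamma_1^{i}w=\Zimp^{*}q$, so that
\begin{equation*}
\gamma_1^{i}w+\Zimp^{*}\gamma_0^{i}w=\Zimp^{*}q-\Zimp^{*}q=0 .
\end{equation*}
Thus $w|_{D}$, which lies in $\{v\in H^{1}(D):\gamma_0 v\in\espace\}$ since $\gamma_0^{i}w=-q\in\espace$, is a solution of the homogeneous interior problem \eqref{eq:eigenX} associated with the operator $\Zimp^{*}$. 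Because $k^{2}$ is, by assumption, not an eigenvalue of $-\Delta$ associated with $\Zimp^{*}$, we conclude $w|_{D}=0$, whence $q=-\gamma_0^{i}w=0$. This proves that $-\Im(T^{*})=\Im(T)$ is strictly positive on $\overline{\mathcal{R}(G^{*})}$.
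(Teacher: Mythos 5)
Your overall route is genuinely different from the paper's, and in outline it is correct: the paper never computes $\Im\langle Tq,q\rangle$ directly. Instead it proves the far-field identity \eqref{eq:posF}, deduces $\Im\langle -T^*x,x\rangle\geq k|\gamma(d)|^2\|GT^*x\|^2$ for $x\in\mathcal{R}(G^*)$, extends this to the closure by continuity, and gets strictness from injectivity of $GT^*$ --- which it can only reach by first establishing that $T$ is Fredholm of index zero (Lemma \ref{coerT}, hence one of the compact-embedding hypotheses), so that injectivity of $T$ (proved by the same Rellich/jump/interior-eigenvalue argument you use) transfers to $T^*$. Your ``$T$-level'' identity
\begin{equation*}
\Im\langle Tq,q\rangle=k|\gamma(d)|^{2}\,\|w^{\infty}\|^{2}_{L^{2}(S^{d-1})}+\langle\Im(\Zimp)q,q\rangle
\end{equation*}
replaces the detour through $F$, and your equality-case argument replaces the Fredholm step; if completed, it proves strict positivity for \emph{every} nonzero $q\in\esplambda$ using only hypothesis \eqref{H1} and the non-eigenvalue assumption, with no embedding hypothesis --- for this particular lemma, a sharper statement than the paper's.

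There is, however, a genuine gap at exactly the step you flag as delicate, and it matters in the paper's general setting. Expanding $\langle Tq,q\rangle=\langle\Zimp\gamma_0^{e}w+\gamma_1^{e}w,\,\gamma_0^{e}w-\gamma_0^{i}w\rangle$ into separate terms and reorganizing the cross terms via $\Zimp-\Zimp^*=2i\,\Im(\Zimp)$ requires pairings such as $\langle\Zimp\gamma_0^{e}w,\gamma_0^{i}w\rangle$ and $\langle\Zimp\gamma_0^{i}w,\gamma_0^{e}w\rangle$. These are undefined in general: the one-sided traces $\gamma_0^{e}w,\gamma_0^{i}w$ belong only to $\trace$, not to $\vgamma$ or $\esplambda$ --- only their \emph{difference} $q$ does. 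Concretely, for $\Zimp=\Delta_\Gamma$ and $\vgamma=H^1(\Gamma)$ (the paper's motivating case), $\gamma_0^{e}w=\traceps\Zimp^*q+\gamma_0^{e}\DL q$ and the single-layer term is merely $H^{1/2}(\Gamma)$, so $\Zimp\gamma_0^{e}w$ lies in $H^{-3/2}(\Gamma)$ and cannot be paired against $\gamma_0^{i}w\in H^{1/2}(\Gamma)$; in the paper's framework $\Zimp\gamma_0^{e}w$ only makes sense through the extension $\Zimp:\trace\mapsto\esplambda^*$ of Lemma \ref{LeT}, and elements of $\esplambda^*$ can only be paired with elements of $\esplambda$. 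The repair is to never let $\Zimp$ act on a one-sided trace: use the duality definition of that extension, $\langle\Zimp\gamma_0^{e}w,q\rangle=\langle\gamma_0^{e}w,\Zimp^*q\rangle$, so that $\Zimp$ enters only through $\Zimp^*q=\gamma_1^{i}w-\gamma_1^{e}w\in\traced$. Starting from $\langle Tq,q\rangle=\overline{\langle\Zimp^*q,\gamma_0^{e}w\rangle}+\langle\gamma_1^{e}w,q\rangle$ and substituting the jump relations, every pairing is a $\traced$--$\trace$ pairing, and a short symmetrization using $\Im\langle\gamma_1^{i}w,\gamma_0^{i}w\rangle=0$ and $\Im\langle\gamma_1^{e}w,\gamma_0^{e}w\rangle=k|\gamma(d)|^2\|w^\infty\|^2$ recovers exactly your identity. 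With that regrouping, your proof is complete; without it, the computation fails precisely for the impedance operators of order greater than one that the paper is designed to cover.
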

\begin{proof}
First of all, we can prove similarly to \cite[Theorem 2.5]{kirsch2008factorization} that
\begin{equation}
\label{eq:posF}
\Im(F)=k |\gamma(d)|^2  F^{*}F +R_{\Zimp},
\end{equation}
where $\gamma(2) = \frac{e^{i\pi/4}}{\sqrt{8\pi k}} $ and $\gamma(3)=\frac{1}{4\pi}$, and $R_{\Zimp} : L^2(S^{d-1}) \mapsto L^2(S^{d-1})$ is such that for all $(g,h) \in (L^2(S^{d-1}))^2$  
\[
(R_{\Zimp}g,h)_{L^2(S^{d-1})} = \Im\langle \Zimp V_g,V_h \rangle.
\]  
The functions $V_g:=v^s_g+v_g$ and $V_h:=v^s_h+v_h$ are the total fields associated with the scattering of  the Herglotz incident waves $u_i=v_g$ and $u_i=v_h$ respectively, namely $v^s_g$ (resp. $v^s_h$) satisfies \eqref{pbdiffdeb}-\eqref{eq:radiation} with $u_i = v_g$ (resp. $v_h$). For all $x \in \mathcal{R}(G^*)$ with $x=G^*y$ we have
\begin{align*}
 \Im\langle -T^* x, x\rangle=\Im\langle -T^* G^*y, G^*y\rangle=\Im \langle Fy,y\rangle
\end{align*}
and by using \eqref{eq:posF} together with the fact that $R_{\Zimp}$ is non negative we get:  
\begin{equation}
  \label{eq:posT}
  \Im\langle -T^* x, x\rangle \geq k|\gamma(d)|^2\| F y\|^2
  \geq k|\gamma(d)|^2\| G T^* G^* y\|^2
  \geq  k|\gamma(d)|^2\| G T^* x\|^2.
\end{equation}
Since $G$ and $T^*$ are two bounded operators inequality \eqref{eq:posT} holds for all $x \in \overline{\mathcal{R}(G^*)}$. Hence we simply  have to prove that $GT^{*}$ is injective to obtain the result of the Lemma. 

The operators $A^\infty:\trace \mapsto L^2(S^{d-1})$ and
$(\Zimp+n_e)^{-1}:\esplambda^* \mapsto \trace$ are injective, therefore,
\eqref{factoG}  implies that $G$ is also injective. Hence if $T^{*}$ is injective, $GT^{*}$ will also be injective. Let us prove that in fact $T$ and then $T^{*}$  are isomorphisms. From Lemma \ref{coerT}, the operator $T$ is of  Fredholm type and index zero, hence we simply have to prove that it is injective to deduce that it is an isomorphism. By \eqref{decomposition_hstart}, the injectivity of $T$ is equivalent to the injectivity of $(\Zimp \traceH+\partialH)^*$, take $q  \in \esplambda$ such that $(\Zimp \traceH+\partialH)^* q =0$, expression \eqref{eq:faradj} leads to
\begin{equation*}
\displaystyle \int_{\Gamma} e^{-ik \hat{x}y}\Zimp^*q(y)ds(y)+\int_{\Gamma} \frac{\partial e^{-ik \hat{x}y}}{\partial \nu(y)}q(y)ds(y)=0.
\end{equation*}
The left-hand side of this expression is the far field pattern associated with
\begin{equation*}
  v_+:=\SL(\Zimp^* q) + \DL(q)=0 \quad\text{ in } \Om.
\end{equation*}
Let us also define	
\begin{equation*}
  v_-:=\SL(\Zimp^* q) + \DL(q) \quad\text{ in } D.
\end{equation*}
Rellich's Lemma  implies that $v_+=0$ outside $D$ and  thanks to the jump conditions for the single and double layer potentials (see \cite{McL00} for example) we have on $\Gamma$:
\begin{equation*}
\begin{cases}
 v_-=v_--v_+=-q, \\
 \pn v_-=\pn v_-- \pn v_+=\Zimp^* q.
\end{cases}
\end{equation*}
The  assumption on $k^2$ implies that $v_-=0$ inside $D$ and therefore
$q=0$. We finally get that $T^{*}$ is an isomorphism, which concludes the proof.
\end{proof}
We are now in position to  state the main Theorem of this paper.
\begin{theorem}
\label{caracterisation}
If one of the two assumptions is fulfilled:
\begin{enumerate}
\item the space $\vgamma$ is compactly embedded into $\trace$,
\item the space $\trace$ is compactly embedded into $\vgamma$,
\end{enumerate}
then provided that $k^2$ is not an eigenvalue of $- \Delta$ associated with the impedance operator 
$\Zimp^*$ we have:
\begin{equation}
\label{rangegal}
 \mathcal{R}(G)=\mathcal{R}(F_{\#}^{\frac{1}{2}})
\end{equation}
 and 
\begin{equation}
  \label{eq:caractcont}
  z \in D \Longleftrightarrow \phi_z \in \mathcal{R}(F_{\#}^{\frac{1}{2}}).
\end{equation}
\end{theorem}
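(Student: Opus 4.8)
The plan is to read the factorization $F=-GT^{*}G^{*}$ of \eqref{eq:facto} as an instance of the abstract range identity theorem \cite[Theorem 2.15]{kirsch2008factorization}, applied to the Gelfand triple $\esplambda \subset L^2(\Gamma)\subset \esplambda^{*}$ of Lemma \ref{Lambdabien}, with data space $Y=L^2(S^{d-1})$, operator $G:\esplambda^{*}\mapsto L^2(S^{d-1})$, and $-T^{*}$ in the role of the middle operator. Once every hypothesis of that theorem is checked, it delivers \eqref{rangegal} directly; the pointwise characterization \eqref{eq:caractcont} then follows by inserting \eqref{rangegal} into Lemma \ref{LeImG}, which already identifies $D$ with $\mathcal{R}(G)$. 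Thus the whole argument amounts to assembling the previously established lemmas into the list of assumptions of \cite[Theorem 2.15]{kirsch2008factorization}.

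First I would check the structural hypotheses. The space $\esplambda$ is a reflexive Hilbert space, and by Lemma \ref{Lambdabien} the embeddings $\esplambda \subset L^2(\Gamma)\subset \esplambda^{*}$ are dense and continuous, so the triple is admissible (here $X=\esplambda^{*}$, $X^{*}=\esplambda$ and $U=L^2(\Gamma)$). Lemma \ref{Hcomp} gives that $G:\esplambda^{*}\mapsto L^2(S^{d-1})$ is compact, injective and of dense range, while Lemma \ref{LeT} provides the boundedness of $T:\esplambda \mapsto \esplambda^{*}$ and hence of $-T^{*}:\esplambda \mapsto \esplambda^{*}$. Finally, $F_{\#}=|\Re(F)|+\Im(F)$ is well defined and positive since $\Im(F)\geq 0$, as follows from \eqref{eq:posF} together with the non-negativity of $R_{\Zimp}$.

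Next comes the sign bookkeeping, which is the only delicate point. By Lemma \ref{coerT}, under either assumption $\Re(rT)=C_{T}+K_{T}$ with $C_{T}$ self-adjoint coercive and $K_{T}$ compact, where $r=1$ in case (i) and $r=-1$ in case (ii), and $\Im(T)$ is compact. Since $\Re(-T^{*})=-\Re(T)=-r(C_{T}+K_{T})$, the real part of the middle operator $-T^{*}$ equals, up to the compact term $-rK_{T}$, the operator $-rC_{T}$: it is coercive in case (ii) and the negative of a coercive operator in case (i). Both signs are admissible because $F_{\#}$ involves the \emph{absolute value} $|\Re(F)|$, which symmetrizes the two situations; this is precisely the two-regime dichotomy announced in the introduction. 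For the imaginary part, $\Im(-T^{*})=-\Im(T^{*})=\Im(T)$ is compact by Lemma \ref{coerT}; its non-negativity on all of $\esplambda$ follows from inequality \eqref{eq:posT}, and its strict positivity on $\overline{\mathcal{R}(G^{*})}$ is exactly Lemma \ref{posT}, which is the only place where the hypothesis that $k^{2}$ is not an eigenvalue of $-\Delta$ associated with $\Zimp^{*}$ is used.

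With all hypotheses verified, \cite[Theorem 2.15]{kirsch2008factorization} yields $\mathcal{R}(G)=\mathcal{R}(F_{\#}^{1/2})$, i.e. \eqref{rangegal}, and chaining with Lemma \ref{LeImG} gives \eqref{eq:caractcont}. The main obstacle is not any single estimate, since these are all packaged in the lemmas, but the consistent matching of signs: one must ensure that the coercivity index $r$ of Lemma \ref{coerT}, the minus sign in $F=-GT^{*}G^{*}$, the conventions \eqref{defreim} for real and imaginary parts, and the positivity statement of Lemma \ref{posT} all align with the hypotheses of the abstract theorem, so that a single invocation simultaneously covers both embedding cases.
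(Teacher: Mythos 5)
Your proposal is correct and takes essentially the same route as the paper, whose proof consists precisely in checking the hypotheses of \cite[Theorem 2.15]{kirsch2008factorization} by assembling Lemmas \ref{Hcomp}, \ref{coerT} and \ref{posT} (within the functional setting provided by Lemmas \ref{Lambdabien} and \ref{LeT}), obtaining \eqref{rangegal}, and then deducing \eqref{eq:caractcont} from Lemma \ref{LeImG}. The only inaccuracy is your claim that \eqref{eq:posT} gives non-negativity of $\Im(-T^{*})$ on all of $\esplambda$: that inequality is established only on $\overline{\mathcal{R}(G^{*})}$ (which is also all the paper itself invokes through Lemma \ref{posT}, together with the injectivity of $T^{*}$ proved there).
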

\begin{proof}
Under these assumptions the results of Lemmas \ref{Hcomp}, \ref{coerT}  and
\ref{posT}  hold which means that the factorization of the far field operator satisfies
all the requirements of \cite[Theorem 2.15]{kirsch2008factorization}. Thus $\mathcal{R}(G)=\mc{R}(F_{\#}^{\frac{1}{2}})$ and we can conclude using Lemma \ref{LeImG}. 
\end{proof}
\begin{remark}
\label{re:dipole}
  Identity \eqref{rangegal} and the decomposition $G=A^\infty(Z_k+n_e)^{-1}$ implies that we can replace the family $(\phi_z)_{z \in \mathbb{R}^d}$ by any family of functions $(\psi_z)_{z \in \mathbb{R}^d}$ satisfying:
  \begin{equation}
    \label{eq:remark}
    z \in D \Longleftrightarrow \psi_z \in \mathcal{R}(A^\infty).
  \end{equation}
An example of these functions is given  by  the far field pattern of a dipole located at
point $z$  with polarization $p$ defined by $ \psi_{z,p}(\hat x) = p \cdot
\hat{x} \phi_z(\hat x) $. These are the far fields associated with $x \mapsto p\cdot\nabla_x G_k(x-z)$. 
\end{remark}
\begin{remark}
\label{re:fail}
 In the intermediate case, when none of the compact embeddings ($\vgamma \subset \trace$ or $\trace \subset \vgamma$) hold, the principal part of operator $T$ fails to be positive. As a matter of fact we are not able to  link the range of $G$ with the range of $F_\#^{\frac{1}{2}}$.
\end{remark}

To complement this work we should verify that most of the time $k^{2}$ is not an eigenvalue of $- \Delta$ associated with the impedance operator 
$\Zimp^*$. 
\begin{theorem}
\label{discretvp}
Assume that $\Zimp^*$ depends analytically on $k\in \bC$. Under one of the assumptions of Theorem \ref{caracterisation}, the set of eigenvalues of $- \Delta$ associated with the impedance operator $\Zimp^*$ is discrete with no finite accumulation point. 
\end{theorem}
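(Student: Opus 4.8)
The plan is to recast the interior eigenvalue problem as the kernel equation of an operator pencil $\mathcal{A}(k)$ that depends holomorphically on $k$ and is Fredholm of index zero for each $k$, and then to invoke analytic Fredholm theory. Introduce the $k$-independent Hilbert space $H := \{v \in H^1(D) : \gamma_0 v \in \espace\}$ endowed with the norm $\|v\|_H^2 := \|v\|_{H^1(D)}^2 + \|\gamma_0 v\|_\vgamma^2$, and for $u,v \in H$ set
\[
a_k(u,v) := \int_D \nabla u \cdot \overline{\nabla v}\,dx - k^2 \int_D u\,\overline{v}\,dx + \langle \Zimp^* \gamma_0 u, \gamma_0 v\rangle .
\]
This form is bounded on $H$ (the surface term because $\Zimp^* : \vgamma \mapsto \vgamma^*$ is bounded), so by the Riesz representation theorem there is a bounded $\mathcal{A}(k) : H \mapsto H$ with $(\mathcal{A}(k)u,v)_H = a_k(u,v)$. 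A function $u$ solves \eqref{eq:eigenX} with $\Zimp$ replaced by $\Zimp^*$ if and only if $\mathcal{A}(k)u = 0$: testing against $v \in C_c^\infty(D)$ recovers $\Delta u + k^2 u = 0$ in $D$ (so that $\pn u$ makes sense), and testing against arbitrary traces recovers $\pn u + \Zimp^* u = 0$ on $\Gamma$. Thus the eigenvalues are exactly the $k$ at which $\mathcal{A}(k)$ fails to be injective.

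Next I would check holomorphy and the Fredholm property. Since $k \mapsto -k^2$ is entire and $k \mapsto \Zimp^*$ is analytic by hypothesis, $k \mapsto a_k(u,v)$ is holomorphic for fixed $u,v$; combined with local boundedness of $\|\mathcal{A}(k)\|$ this makes $k \mapsto \mathcal{A}(k)$ a holomorphic $\mathcal{L}(H)$-valued map on $\bC$. For Fredholmness I would reproduce, at each fixed $k$, the splitting already used in the proofs of Corollary \ref{existence} and Lemma \ref{coerT}, the only change being that the exterior Dirichlet-to-Neumann map $n_e$ (with its coercive part $C_e$ from Proposition \ref{fredne}) is replaced by the interior volume form $u \mapsto \int_D \nabla u \cdot \overline{\nabla \cdot}\,dx$. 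The term $-k^2\int_D u\,\overline{v}\,dx$ is compact on $H$ by the Rellich theorem. When $\trace$ is compactly embedded into $\vgamma$ the surface term is itself compact and the coercive part is simply the $H^1(D)$ inner product; when $\vgamma$ is compactly embedded into $\trace$ the coercivity of $\|u\|_\vgamma$ must instead be furnished by the surface term, which is precisely the role of the coercivity hypothesis on $\Zimp$ in Corollary \ref{existence}. In either case $\mathcal{A}(k) = \mathcal{C}(k) + \mathcal{K}(k)$ with $\mathcal{C}(k)$ coercive and $\mathcal{K}(k)$ compact, so $\mathcal{A}(k)$ is Fredholm of index zero for every $k$.

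It then remains to exhibit a single $k_0$ at which $\mathcal{A}(k_0)$ is invertible, which is where assumption \eqref{H1} enters. If $\mathcal{A}(k)u = 0$, a Green's formula in $D$ gives $\|\nabla u\|_{L^2(D)}^2 - k^2\|u\|_{L^2(D)}^2 + \langle \Zimp^* u, u\rangle = 0$; taking imaginary parts and using $\Im\langle \Zimp^* u,u\rangle = -\Im\langle \Zimp u,u\rangle \leq 0$, which is exactly \eqref{H1}, yields $\Im(k^2)\,\|u\|_{L^2(D)}^2 \leq 0$. Hence for any $k_0$ with $\Im(k_0^2) > 0$ (for instance $k_0 = e^{i\pi/4}$, so that $k_0^2 = i$) we must have $u = 0$, so $\mathcal{A}(k_0)$ is injective and, being Fredholm of index zero, an isomorphism.

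Finally, since $\mathcal{A}(\cdot)$ is a holomorphic Fredholm family of index zero on the connected set $\bC$ that is invertible at $k_0$, the analytic Fredholm theorem (see \cite{colton1998inverse}) shows that $k \mapsto \mathcal{A}(k)^{-1}$ is meromorphic and that the set of $k$ at which $\mathcal{A}(k)$ is not invertible is discrete with no finite accumulation point. By the equivalence established in the first step this set is precisely the set of eigenvalues of $-\Delta$ associated with $\Zimp^*$, which proves the claim. I expect the main obstacle to be the Fredholmness in the regime where $\vgamma$ embeds compactly into $\trace$: there the coercivity cannot come from the volume term and must be extracted from $\Zimp^*$ itself, so the argument genuinely relies on the structural hypotheses on $\Zimp$ carried over from Corollary \ref{existence}, together with the compatibility of the two compact-embedding cases with the standing assumptions of Section~4.
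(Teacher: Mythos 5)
Your overall strategy (holomorphic pencil, injectivity at $k_0$ with $k_0^2=i$ via \eqref{H1}, analytic Fredholm theorem) mirrors the paper's, and your treatment of case (ii) --- where $\trace$ is compactly embedded in $\vgamma$, so that $H=H^1(D)$ with equivalent norm and the surface term is compact --- is correct, and even tidier than the paper's since the volume formulation never needs the interior Dirichlet-to-Neumann map $n_i(k)$ nor any exclusion of interior Dirichlet eigenvalues. The problem is case (i), where $\vgamma$ is compactly embedded in $\trace$: this is exactly the point you flag as ``the main obstacle,'' and the resolution you sketch --- ``the coercivity must be furnished by the surface term'' --- cannot work under the paper's standing hypotheses. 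In case (i) the admissible impedances are those of points 1 and 2 of Corollary \ref{existence}: either $\Zimp=T_Z+K_Z$ with $T_Z$ merely an isomorphism (no sign information at all), or $\Zimp=-C_Z+K_Z$ with $C_Z$ coercive in the sense \eqref{eq:poscz} --- note the \emph{minus} sign. Concretely, for the paper's running second-order example $\Zimp=\divt\mu\gradt-\lambda$ with $\Re\mu>0$ and $\vgamma=H^1(\Gamma)$ (which is case (i)), one has $\Re\langle \Zimp u,u\rangle\approx-\int_\Gamma\Re\mu\,|\gradt u|^2\le 0$, so the real part of your form $a_k$ behaves like $\|\nabla u\|^2_{L^2(D)}-\int_\Gamma\Re\mu\,|\gradt \gamma_0 u|^2$ plus lower-order terms: genuinely indefinite on $H$, and neither $a_k$ nor $-a_k$ admits a coercive-plus-compact decomposition. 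The root cause is a sign clash you cannot repair: the exterior map $n_e=-C_e+K_e$ of Proposition \ref{fredne} has coercively \emph{negative} principal part, so in the surface formulation it adds constructively to $-C_Z$; but your interior Dirichlet integral $\int_D|\nabla u|^2$ plays the role of an interior Dirichlet-to-Neumann map, whose principal part is \emph{positive}. So ``reproducing the splitting of Corollary \ref{existence}'' does not transfer to the volume form.

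The Fredholmness of your $\mathcal{A}(k)$ in case (i) is in fact true, but proving it essentially forces the paper's reduction: split $H=H^1_0(D)\oplus\{$Helmholtz extensions$\}$, observe that $a_k$ is block-diagonal modulo compact terms, and note that the boundary block is $\langle(\Zimp^*+n_i(k))\gamma_0\cdot,\gamma_0\cdot\rangle$ --- i.e., you are back to the paper's surface operator, whose Fredholmness in case (i) holds for a reason unrelated to coercivity of the full form: $n_i(k_0)$, being bounded from $\trace$ to $\traced$, is \emph{compact} from $\espace=\vgamma$ to $\espacedual$ (it factors through the compact embeddings), so the Fredholm structure is inherited from $\Zimp$ alone. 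This also repairs a secondary defect of your argument: the theorem cited in the paper, \cite[Theorem 8.19]{colton1998inverse}, applies to analytic families of the form identity-plus-compact, not to general holomorphic Fredholm pencils, so you need the factorization $\mathcal{A}(k)=\mathcal{C}(k)\bigl(I+\mathcal{C}(k)^{-1}\mathcal{K}(k)\bigr)$ with $\mathcal{C}(k)$ invertible and $\mathcal{K}(k)$ compact and analytic --- precisely what the failed splitting was supposed to provide. The paper obtains it by freezing $k_0$ inside $T_k:=\Zimp^*+n_i(k_0)$, proving $T_k$ is an isomorphism for each $k$ (Fredholm by the argument above, injective by the $k_0^2=i$ computation you also use), and isolating all remaining $k$-dependence in the compact analytic term $n_i(k)-n_i(k_0)$.
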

\begin{proof}
In order to prove that, we use the Analytic Fredholm Theorem (\cite[Theorem
8.19]{colton1998inverse}). If $-k^2$ is not a Dirichlet eigenvalue  in $D$ we
can define the interior Dirichlet-to-Neumann map $n_i(k): \trace \mapsto
\traced$ for $g \in \trace$ by $n_i(k)g=\pn u$ where $u$ is the unique solution
to the Helmholtz' equation in $D$ with $u=g$ on $\Gamma$. If $\Zimp^*+n_i(k)$
is injective, then $-k^2$ is not an eigenvalue associated with $\Zimp^*$.  In
the remaining of the proof, we show that $\Zimp^*  +n_i(k)$ fails to be injective only for $k$ in a discrete set of $\bC$. 
 
Let $k_0$ be a complex number   such that:
$ k^2_0=i$,
and assume that the embedding of $\vgamma$ into $\trace$ is compact. Then,
\begin{equation}
  \label{eq:decomposition:a}
  \Zimp^*+n_i(k)=T_k+n_i(k)-n_i(k_0),
\end{equation}
where $T_k:\espace \mapsto \espacedual$ is defined by
\begin{equation*}
      T_k :=\Zimp^* + n_i(k_0).
\end{equation*}
Let us prove that $T_k$ is an isomorphism. Following the lines of the proof of
points 2 and 3 of 
Corollary \ref{existence} one easily observe that $T_k$ is Fredholm with index 0.  Let us prove that $T_k:\espace \mapsto \espacedual$ is injective. Take $g \in \espace$ such that $T_k g=0$ and $u_g \in H^1(D)$ such that
\begin{equation*}
  \begin{cases}
  \Delta u_g+ k_0^2 u_g=0 \text{ in } D,\\
  u_g=g \text{ on } \Gamma. 
  \end{cases}
\end{equation*}
Then $\partial_\nu u_g = n_i(k_0)g = - \Zimp^* u_g$ and Green's formula yields:
\begin{equation*}
 \int_D |\nabla u_g|^2 dx- k_0^2 \int_D |u_g|^2 dx + \langle\Zimp^* u_g,u_g\rangle=0. 
\end{equation*}
By taking the imaginary part of this last equation we have:
\begin{equation*}
 \Im(k^2_0) \int_D |u_g|^2 dx - \Im\langle\Zimp^* u_g,u_g\rangle=0.
\end{equation*}
Since we supposed that  $\Im(\Zimp) \geq 0$ and $k_0^2=i$, we get
\begin{equation*}
\int_D |u_g|^2 dx=0, 
\end{equation*}
which gives $g=0$ and $T_k$ is an isomorphism. From \eqref{eq:decomposition:a} we have:
\begin{equation*}
  \Zimp+n_i(k)=T_k(I+T_k^{-1}(n_i(k)-n_i(k_0)))
\end{equation*}
and then $\Zimp+n_i(k):\espace \mapsto \espacedual$ is injective if and only if the operator
\begin{equation*}
  I+T_k^{-1}(n_i(k)-n_i(k_0)):\espace \mapsto \espace,
\end{equation*}
is injective. The parameter dependent family of operators $n_i(k)$ depends analytically on $k$  over the complement of the Dirichlet eigenvalue  in $D$ and we supposed that
$\Zimp^*$ depends analytically on $k$ then $T_k^{-1}(n_i(k)-n_i(k_0))$ depends analytically on $k$. Since this operator is bounded from $\trace \mapsto \espace$ this operator is a compact operator from
$\espace \mapsto \espace$. Thus we can apply the the analytic Fredholm Theorem to
\begin{equation*}
  I+T_k^{-1}(n_i(k)-n_i(k_0)).
\end{equation*}
For $k=k_0$ this operator is  injective and then one can conclude that $  I+T_k^{-1}(n_i(k)-n_i(k_0))$ is injective except for a discrete a discrete set of $k$. Thus the set of the eigenvalues of $- \Delta$ associated with the impedance condition  $\Zimp^*$ is discrete with no finite accumulation point  

\end{proof}
\section{Numerical tests for second order surface operators}
Motivated by models for thin coatings we consider in this section impedance
operators of the form
\begin{equation}
\label{eq:impoperator}
  \Zimp=\divt \mu \nabla_\Gamma - \lambda
\end{equation}
where $(\mu,\lambda) \in (L^\infty(\Gamma))^2$ depend analytically on $k$ with
non positive imaginary parts and where the real part of  $\mu$ is positive
definite or negative definite. Then, Theorems \ref{caracterisation} and \ref{discretvp} apply and except for a discrete set of wave numbers $k$, the following equation
\begin{equation}
\label{caracterisationequ}
 F_\#^{\frac{1}{2}}g_z=\phi_z,
\end{equation}
has a solution if and only if $z \in D$ whenever $\phi_z$ is a valid right-hand
side to apply Theorem \ref{caracterisation} (see Remark \ref{re:dipole}). We
shall 
give in the following some two dimensional numerical experiments that show how this test behaves
when the far field data are given by an obstacle characterized by a generalized impedance boundary condition of the form  \eqref{eq:impoperator}.

\subsection{Numerical setup and regularization procedure}
Here we identify the unit sphere $S^1$ of $\mathbb{R}^2$ with the interval $] 0,2 \pi[$. The data set we consider is not the operator $F$ itself but a discrete version that we represent by the matrix $\mathbb{U}_n:=(u^{n}_{ij})_{1 \leq i \leq n,1 \leq j \leq n}$ where $u^{n}_{ij}$ is an  approximation of $u^\infty \left(\frac{2\pi (i-1)}{n},\frac{2\pi (j-1)}{n}\right)$. For $\hat \theta \in  S^1$, $u^\infty(.,\hat{\theta})$ is the far field associated with $u_s$ which is the unique solution of \eqref{pbdiffdeb}-\eqref{eq:radiation} with $u_i$ defined for $x \in \mathbb{R}^2$ by $u_i(x):=e^{ik \hat{\theta} \cdot x}$.

In practice, the matrix  $\mathbb{U}_n$ contains the values of a synthetic far field computed with a finite element method. We also contaminate these data by adding some random noise and build a noisy far field matrix $\mathbb{U}^\delta_n:=(u^{n,\delta}_{ij})_{1 \leq i \leq n,1 \leq j \leq n}$ where
\begin{equation*}
  u^{n,\delta}_{ij}=u^{n}_{ij}(1+\eta (X^{ij}_1+iX^{ij}_2))
\end{equation*}
for $\eta>0$ and $(X^{ij}_k)_{1 \leq i \leq n, 1 \leq j \leq n,k \in \{ 1,2\}}$ are uniform random variables on $[-1,1]$.
We denote by $\Phi_z^n \in \mathbb{C}^n$ the vector defined $1 \leq i \leq n$, $\Phi^n_z(i)=\phi_z(\frac{2 \pi i}{n})$ where $\phi_z$ is a valid right-hand side to apply Theorem \ref{caracterisation} (see Remark \ref{re:dipole}). We define the  real symmetric matrix $(\mathbb{U}^\delta_n)_\#:=|\Re(\mathbb{U}^\delta_n)|+ |\Im(\mathbb{U}^\delta_n)|$ and denote by $(e^n_i,\lambda^n_i)_{1\leq i \leq n}$ its eigenvectors and eigenvalues. Since this matrix is positive, its eigenvalues are positive and we  assume in addition that they are non zero. Then, Theorem \ref{caracterisation} and Picard's criterion suggest to use the function
\begin{equation}
\label{eq:Picard}
  z \mapsto \left(\sum\limits_{j=1}^n \frac{1}{\lambda_j^n}|(e_j^n,\phi_z^n)|^2\right)^{-1}
\end{equation}
as an indicator function for the obstacle $D$ since it should have small values for any $z$ outside the obstacle $D$ and greater values for $z$ inside $D$.
Since the far field operator $F$ is compact, the values $\lambda_j^n$ are very small for large $j$ in $\mathbb{N}$. When considering noisy data, it occurs that  $|(e_j^n,\phi_z^n)|$ is not small for large $j$ and as a consequence criterion \eqref{eq:Picard} could be small even if $z$ is in $D$. Therefore, we prefer to use the regularized function:
\begin{equation}
 \label{def:g_z}
  w_n(z):=\left(\sum\limits_{j=1}^n \frac{\lambda_j^n}{(\alpha_n+\lambda_j^n)^2}|(e_j^n,\phi_z^n)|^2\right)^{-1},
\end{equation}
where $\alpha_n$ is the regularization coefficient.
We remark that $w_n(z)$ is nothing but the inverse of the squared norm of the  unique solution to the discrete problem with Tikhonov regularization
\[
 (\alpha_n I +(\mathbb{U}^\delta_n)_\#)g_n(z) = (\mathbb{U}^\delta)_\#^{\frac{1}{2}} \phi_z^n.
\]
The regularization coefficient  $\alpha_n$ is chosen by using the Morozov discrepancy principle which is: choose  $\alpha_n$ as the unique solution of find $\alpha \in (0,\delta_n\lambda^n_1 ]$ such that
\[
\|(\mathbb{U}^\delta_n)_\#^{\frac{1}{2}} g_n(z) - \phi^n_z \|_{L^2(S^1)} = \delta_n \|g_n(z)\|_{L^2(S^1)}
\]
or equivalently
\begin{equation*}
  \sum\limits_{j=1}^n \frac{\alpha^2-\delta_n^2 |\lambda_j^n|}{|\alpha+\lambda_j^n|^2}|(e_j^n,\phi_z^n)|^2=0,
\end{equation*}
where $\delta_n>0$ is a bound on the noise on the operator $F_\#^{1/2}$ and the right hand side $\phi_z$.
It has been proven in \cite{Kir98} (see also \cite{TiGoStYa95}) that the function $w_n(z)$ is an indicator function of $D$ when $n$ tends to infinity and the noise $\delta_n$ to $0$.
\subsection{Numerical experiments}
Let $w_n(z)$ be the function given by \eqref{def:g_z} whith the  test function
$\phi_z = \psi_z / \|\psi_z\|_{L^2}$ with
$\phi_z(\hat{x}) =  e^{-ik\hat{x}\cdot z}$   and
$w_{n,\text{dipole}}^\theta(z)$ the function given by \eqref{def:g_z} when
$\phi_z = \psi_z / \|\psi_z\|_{L^2}$ with $\psi_{z}(\hat{x}) = p(\theta) \cdot \hat{x}  e^{-ik\hat{x}\cdot z}$, with
$p(\theta) = (\cos(\theta), \sin(\theta))^t$. We choose to take $4$ directions
of the polarization $p$ and we define
\[
w_{n,\text{dipole}}(z):=\min\limits_{ \theta \in \{0,\pi/4,3\pi/4,\pi\}} w_{n,\text{dipole}}^\theta(z).
\]
Remark that thanks to the linearity of the far field equation, taking $\theta$
in the interval $[0,\pi]$ is equivalent to take it in $[0,2\pi]$.  We finally plot the values of
\begin{equation*}
W_n(z) := w_n(z)+w_{n,\text{dipole}}(z)
\end{equation*}
in order to determine the indicator function of the scatterer. We refer to
\cite{boukari:hal-00768729,benhassen:hal-00743816} for a discussion on this choice of indicator function based on the
analysis of impedance boundary conditions for cracks.  Numerical illustration
of the importance of this choice is given by Figure \ref{fig:Comp} where we
compare the plots of  $w_n$, $w_{n,\text{dipole}}$ and $W_n$ for $\mu=0.1$. The
reconstruction provided by $W_n$ is much better than the two others.
  \begin{figure}
  \centering
  \begin{tabular}{ccc}
\includegraphics[width=.3\textwidth]{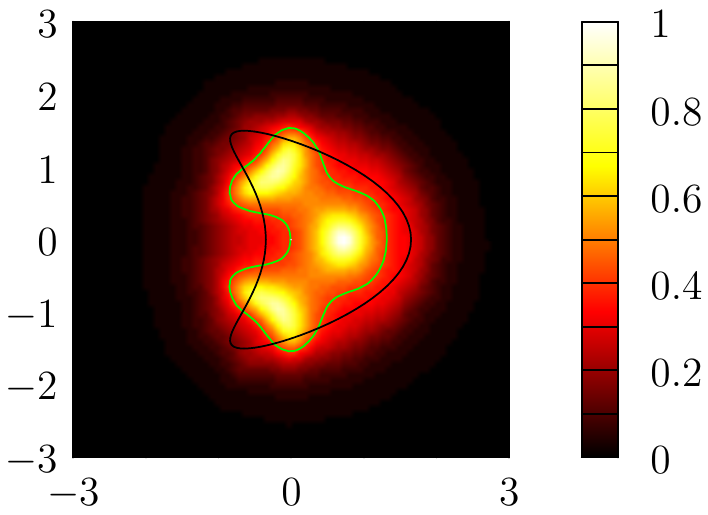}&
\includegraphics[width=.3\textwidth]{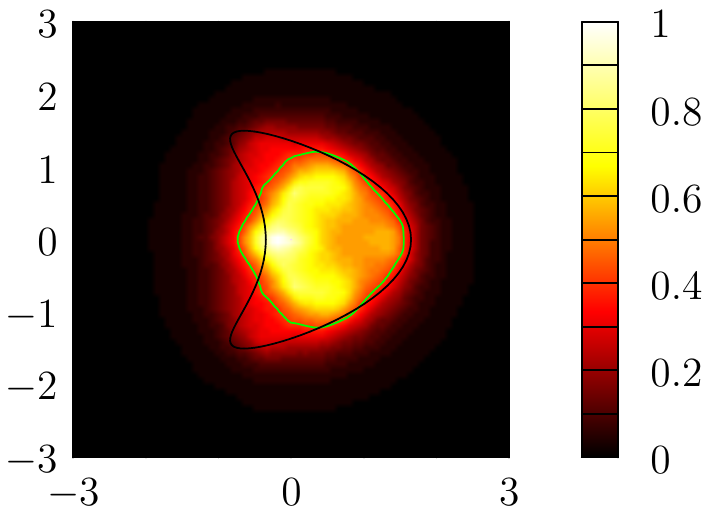} \hfill &
  \includegraphics[width=.3\textwidth]{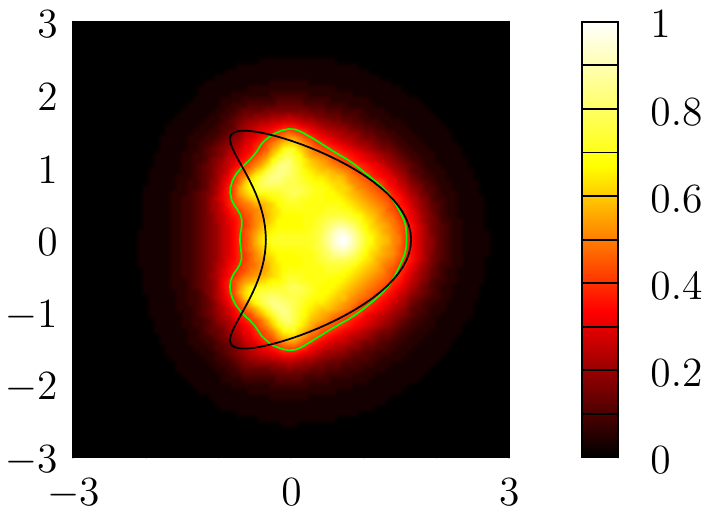} \\
 (A) \small{$w_n$} &\small{(B) $w_{n,\text{dipole}}$}&\small{(C) $W_n$}
  \end{tabular}
\caption{Comparison of different right-hand sides, with $\mu=0.1$, $\lambda=0$, $k=2$ and no noise.}
\label{fig:Comp}
\end{figure}
The searched domain (the black line on the plot) is given by
\begin{equation}
\label{eq:gee}
 \partial D=\{ (\cos(t)+0.65\cos(2t),1.5\sin(t)),\ 0\leq t \leq 2 \pi \}.
\end{equation}
The green line represents the isoline of the function $W_n(z)$ that visually
fits well the unknown shape $\partial D$. The size of $D$ is of the order of
one wavelength  when the wavenumber $k$ is equal to $2$.  The sampling domain
is the square   $[-3,3]^2$ which is discretized with  $80\times80$ points
$z$. Finally, we take $n=50$, which means that we send $50$ incident waves
uniformly distributed on the unit circle and we observe the far field at all
these directions.  For noisy data, we take $\eta=1\%$. We restrict ourselves to
the cases $\lambda = 0$ but the results for $\lambda \neq 0$ are similar.   

Figures  \ref{fig:noise} and \ref{fig:kitenoise} show the influence of the
generalized impedance coefficient $\mu$ on two different geometries: the first
one is convex and is an ellipse of semi-minor axis 1 and semi-major axis 2, the
second is the non-convex obstacle (Kite shape) given by \eqref{eq:gee}. For
large values and small values  of $\mu$ ($\mu=10$ and $\mu=0.01$) the
reconstruction is quite satisfactory (see Figures
\ref{fig:noise}(C), \ref{fig:kitenoise}(C),
\ref{fig:noise}(A) and \ref{fig:kitenoise}(A)) while for an
intermediate value ($\mu=0.1$)  the reconstruction is poorer (see Figures
\ref{fig:noise}(B) and \ref{fig:kitenoise}(B)). Let us also
mention that the noiseless reconstruction (Figure \ref{fig:Comp}(C)) for $\mu=0.1$
is quite accurate which means that in this case the reconstruction is sensitive to the noise. A possible explanation is that the test functions $\phi_z$ we use are not well-suited to this case. Figure \ref{fig:k5} shows the influence of the frequency, these figures should be compared to the Figure \ref{fig:kitenoise}(B) and, as we would expect, we increase the precision of the reconstruction by increasing the frequency.

\begin{figure}
\begin{tabular}{ccc}
\includegraphics[width=.3\textwidth]{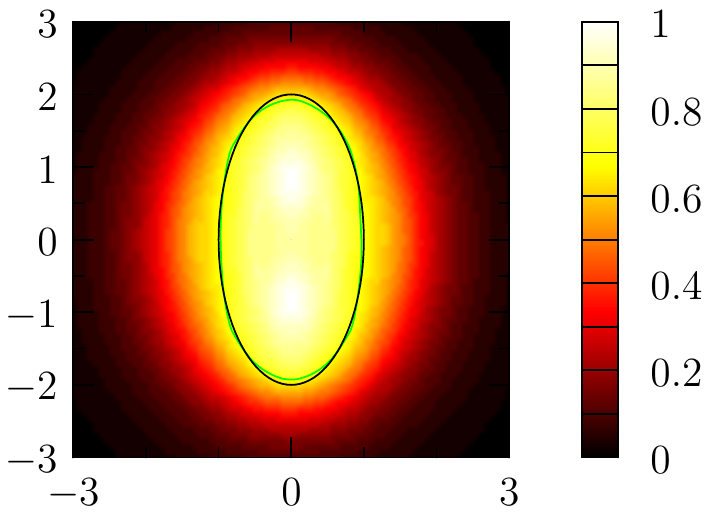}&
\includegraphics[width=.3\textwidth]{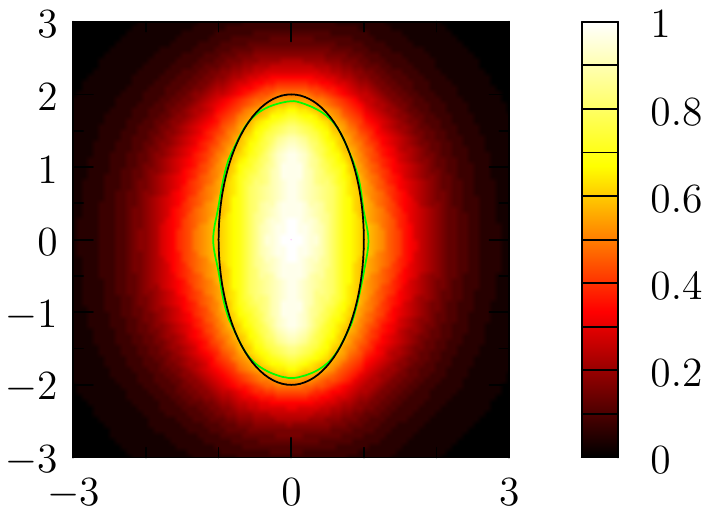}\label{fig:mumoyenellipse2noise}&
\includegraphics[width=.3\textwidth]{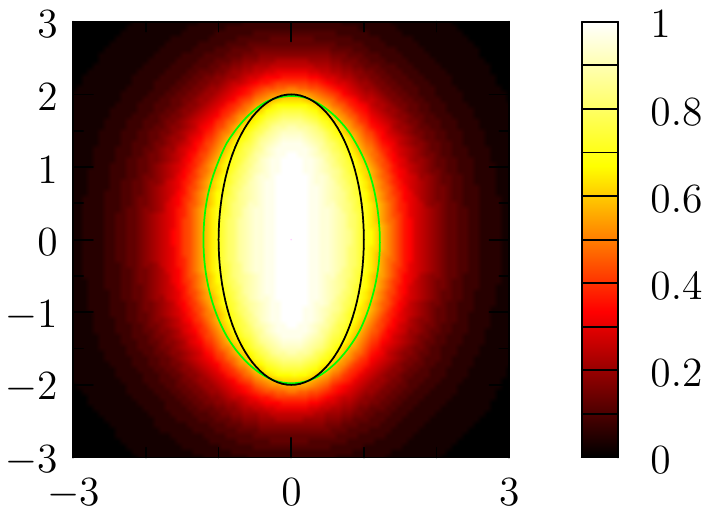}\label{fig:mugrandellipse2noise} \\
\small{(A) ellipse, $\mu=0.01$ }&\small{(B) ellipse, $\mu=0.1$}&\small{(C) ellipse, $\mu=10$}
\end{tabular}
\caption{Reconstruction of a convex geometry for several $\mu$, $k=2$ with 1\% of noise.}
\label{fig:noise}
\end{figure}

\begin{figure}
\begin{tabular}{ccc}
\includegraphics[width=.3\textwidth]{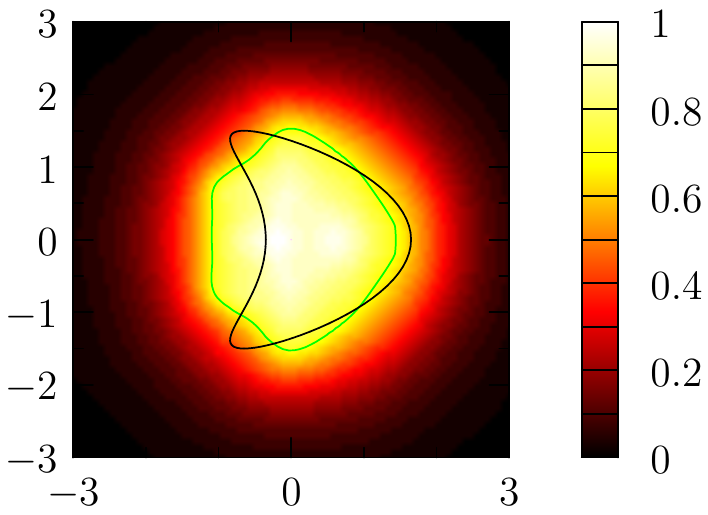}\hfill &
\includegraphics[width=.3\textwidth]{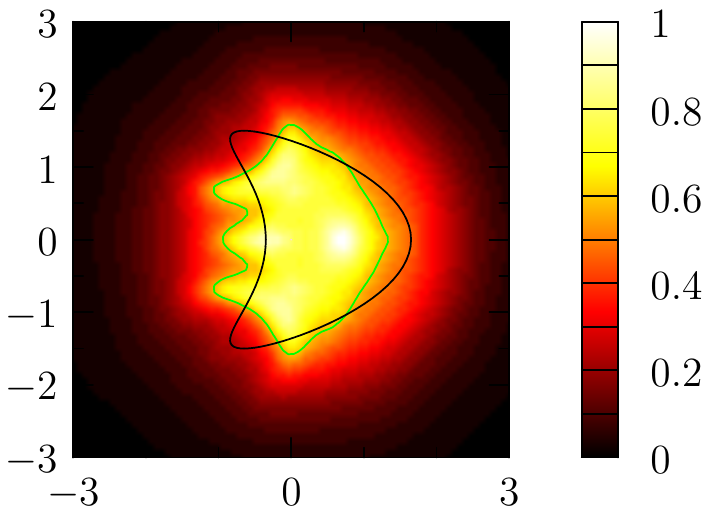}&
\includegraphics[width=.3\textwidth]{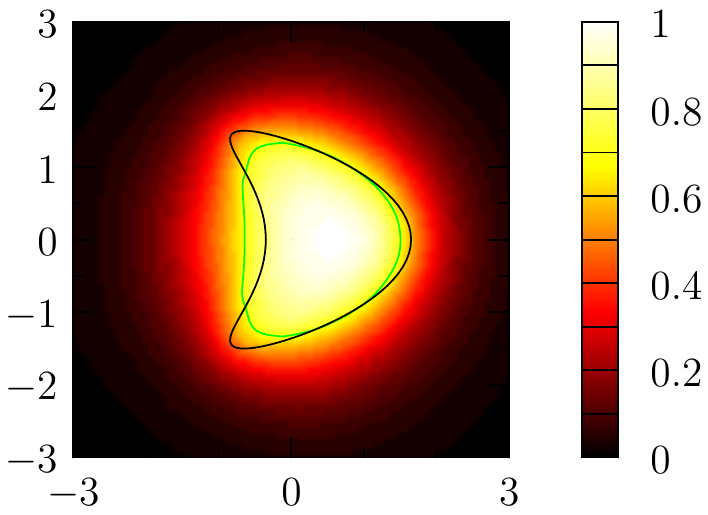}\\
\small{(A) $\mu=0.01$}&\small{(B) $\mu=0.1$}&\small{(C) $\mu=10$}
\end{tabular}
\caption{Reconstruction of a non-convex geometry for several $\mu$, $k=2$ with 1\% of noise.}
\label{fig:kitenoise}
\end{figure}

\begin{figure}
\begin{tabular}{ccc}
\includegraphics[width=.3\textwidth]{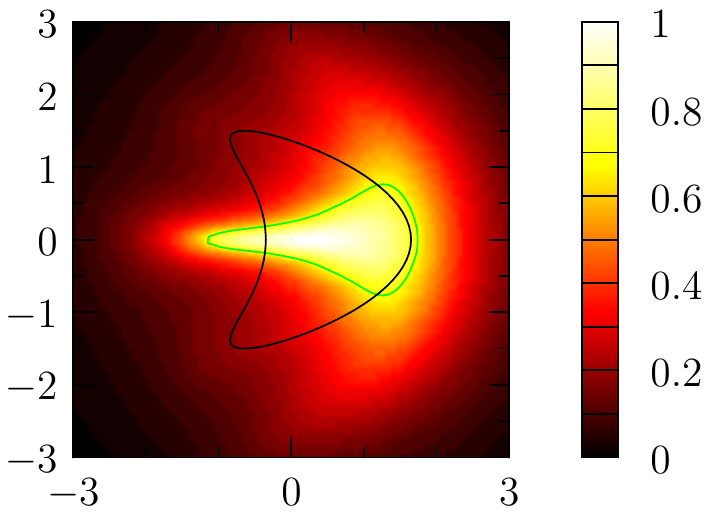}&
\includegraphics[width=.3\textwidth]{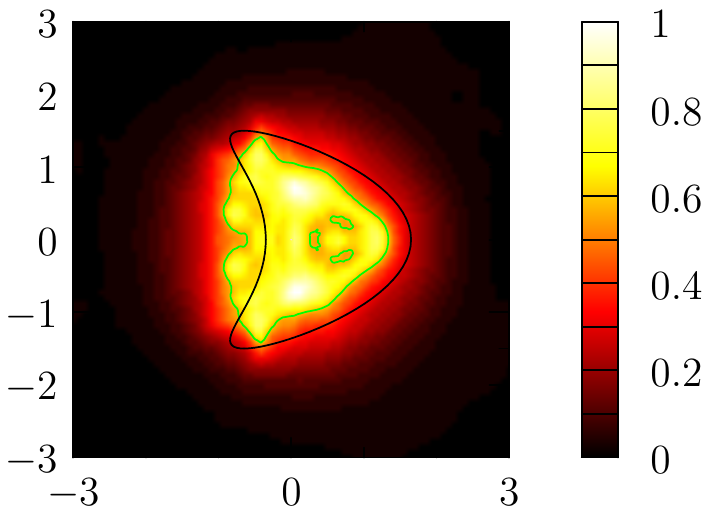}&
\includegraphics[width=.3\textwidth]{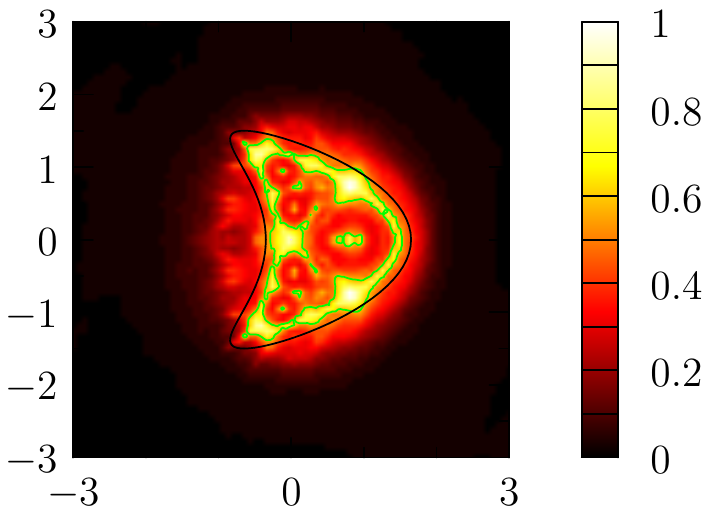}\\
\small{(A) $k=1$}&\small{(B) $k=4$}&\small{(C) $k=8$}
\end{tabular}
\caption{Reconstruction of a non-convex geometry for several $k$, $\mu=0.1$ with 1\% of noise.}
\label{fig:k5}
\end{figure}
\clearpage

\section*{Appendix}
\begin{proof}[Proof of Proposition \ref{fredne}]
Let us denote by $B_r$ an open ball of radius $r$ which contains $\overline{D}$.  The exterior Dirichlet-to-Neumann map on $\partial B_{r}$, $S_r:\traces \mapsto \tracesd$ is defined for $g \in \traces$ by $S_rg=\partial_r u^e|_{\partial B_r}$ where $u^e$ is the outgoing solution of the Helmholtz' equation outside $B_r$ and $u^e=g$ on $\partial B_r$. Thanks to the appendix of \cite{Koy03} for $d=2$ and \cite[Theorem 2.6.4]{Ned01} for $d=3$ we recall that the operator $S_r$ satisfies
\begin{align}
  \label{eq:s_rre}
  \Re \langle S_r u,u \rangle &\leq 0,\ \forall u \in \traces,\\
  \label{eq:s_rim}
  \Im \langle S_r u,u \rangle&\geq0,\ \forall u \in \traces .
\end{align}
  Let us denote $\Omega_r:=(\mathbb{R}^d \setminus \overline{D})\cap B_r$ and  define the bounded operator $A_r:\trace \mapsto H^1(\Omega_r)$
for $g \in \trace$ by $A_r g:=u_r$ where $u_r$ is the unique solution of:
\begin{equation}	
\label{pbreduit}
 	\begin{cases}\Delta u_r+k^2 u_r=0 \text{ in } \Omega_r,\\
	u_r=g \sg, \\
	\partial_r u_r =S_r u_r \text{ on } \partial B_r.
	\end{cases}
\end{equation}
Let us introduce $C_e: \trace \mapsto \traced$ and $K_e:\trace \mapsto \traced$ defined by
\begin{equation*}
\langle C_e f,g \rangle= (A_rf,A_rg)_{H^1(\Omega_r)}-\langle S_r f,g\rangle,
\end{equation*}
and
\begin{equation*}
\langle K_e f,g \rangle = (k^2+1)\int_{\Omega_r} A_rf \overline{A_rg} d x,
\end{equation*}
for all $(f,g) \in \trace\times \trace$. Using Green's formula and the Helmholtz' equation satisfied by $A_{r}f$ and $A_{r}g$ in $\Omega_r$, we prove that
\begin{equation*}
 n_e=-C_e+K_e.
\end{equation*}
Let us prove that $C_e$ has a coercive real part and that $K_{e}$ is compact. Remark that 
\begin{equation}
\label{eq:Ce}
 \Re\langle C_e f,f\rangle=\| A_rf \|^2_{H^1(\Omega_r)}-\Re\langle S_r f,f \rangle,
\end{equation}
but $ \Re \langle S_r f,f \rangle \leq 0$ by \eqref{eq:s_rre}. The range of $A_r$ is given by:
\begin{equation*}
 \mathcal{R}(A_r)=\{u \in H^1(\Omega_r) ,\ \Delta u +k^2 u=0 \text{ and } S_r u=\pn u \text{ on } \partial B_r \},
\end{equation*}
and this  space is a closed subspace of $H^1(\Omega_r)$.  Hence, since $A_r$ is injective  there exists $C>0$ such that
\begin{equation*}
 	\|A_rf\|_{H^1(\Omega_r)} \geq C \| f\|_{\trace} ,\ \forall f \in \trace.
\end{equation*}
In regard of \eqref{eq:Ce}, we deduce that $C_{e}$ has a coercive real part. We also have 
\[
\Im \langle C_e f,g\rangle = -\Im\langle S_r  f,g\rangle \leq 0
\]
by \eqref{eq:s_rim}.

To conclude remark that $A_r:\trace \mapsto H^1(B_r)$ is bounded, and since the embedding of $H^{1}(\Omega_{r})$ into $L^{2}(\Omega_{r})$ is compact, $K_e: \trace \mapsto \traced$ is compact. 
\end{proof}

\begin{proof}[Proof of lemma \ref{Lambdabien}]
Let us prove that the injection of $\Lambda(\Gamma)$ into $\espace$ is dense.  Take $x \in \espace$ and $\epsilon>0$. The space $L^2(\Gamma)$ is densely embedded in $\espacedual$ and then there exists $y_\epsilon \in L^2(\Gamma)$ such that:
\begin{equation*}
   \| (\Zimp+n_e)^*x -y_\epsilon\|_{\espacedual} \leq \frac{\epsilon}{\| \left(\Zimp+n_e)^{-1}\right)^{*} \|_{\mathcal{L}\left(\espacedual,\espace\right)}}
\end{equation*}
where for any Hilbert space $E$, $\mc{L}(E,E^*)$ is the space of linear and bounded applications from $E$ to $E^*$.
Thus we have for $z_\epsilon := \left((\Zimp+n_e)^{-1} \right)^*y_\epsilon$
\begin{equation*}
  \|x-z_\epsilon\|_{\espace} \leq \| \left((\Zimp+n_e)^{-1}\right)^* \|_{\mathcal{L}\left(\espacedual,\espace\right)} \| (\Zimp+n_e)^*x -y_\epsilon\|_{\espacedual},
\end{equation*}
and we get
\begin{equation*}
  \|x- z_\epsilon\|_{\espace} \leq \epsilon.
\end{equation*}
But $z_\epsilon \in \espace$ since $y_\epsilon \in \ldeux$ and we have
\[
\Zimp^* z_\epsilon = (\Zimp +n_e)^*z_\epsilon -n_e^* z_\epsilon = y_\ve - n_e^*z_\epsilon \in \trace.
\]
Hence we obtain the density result.

Assume that $\vgamma$ is compactly embedded into $\trace$ and let us prove that $\esplambda$ is also compactly embedded into $\espace$. Let $(x_n)_{n \in \mathbb{N}}$ be a bounded sequence in $\esplambda$.
We have
\begin{align*}
&  \|(\Zimp +n_e)^* x_n\|^2_{\traced} \leq \left(\|\Zimp^* x_n\|_{\traced}+\|n_e^* x_n\|_{\traced} \right)^2,\\
  &\leq 2\max\left(1,\|n_e\|_{\mathcal{L}\left(\trace,\traced\right)} \right) \left(\|\Zimp^* x_n\|^2_{\traced}+\| x_n\|^2_{\trace} \right),\\
\end{align*}
and then $(\Zimp +n_e)^* x_n$ is bounded in $\traced$.
Using that $\traced$ is compactly embedded into $\espacedual$ we get that (up to a subsequence)  $(\Zimp +n_e)^* x_n$ converges into $\espacedual$.
Thus using that $\left((\Zimp +n_e)^*\right)^{-1}:\espacedual \mapsto \espace$ is bounded we get that $x_n$ (up to a subsequence) converges into $\espace$.

 \end{proof}

\bibliographystyle{plain}

\bibliography{biblio}

\begin{thebibliography}{10}

\bibitem{benhassen:hal-00743816}
F.~Ben~Hassen, Y.~Boukari, and H.~Haddar.
\newblock {Application of the linear sampling method to identify cracks with
  impedance boundary conditions}.
\newblock {\em {Inverse Problems in Science and Engineering}}, pages 1--25,
  2012.

\bibitem{BenLem96}
A.~Bendali and K.~Lemrabet.
\newblock The effect of a thin coating on the scattering of a time-harmonic
  wave for the helmholtz equation.
\newblock {\em SIAM J. Appl. Math.}, 56:1664--1693, December 1996.

\bibitem{boukari:hal-00768729}
Y.~Boukari and H.~Haddar.
\newblock {The Factorization method applied to cracks with impedance boundary
  conditions}.
\newblock Submitted, 2013.

\bibitem{BoChHa12}
L.~Bourgeois, N.~Chaulet, and H.~Haddar.
\newblock On simultaneous identification of the shape and generalized impedance
  boundary condition in obstacle scattering.
\newblock {\em SIAM J. Sci. Comput.}, 34(3), 2012.

\bibitem{krcaip12}
F.~Cakoni and R.~Kress.
\newblock Integral equation methods for the inverse obstacle problem with
  generalized impedance boundary condition.
\newblock {\em Inverse Problems}, 29(1):015005, 2013.

\bibitem{colton1998inverse}
D.L. Colton and R.~Kress.
\newblock {\em Inverse acoustic and electromagnetic scattering theory}.
\newblock Applied mathematical sciences. Springer, 1998.

\bibitem{DHJ06-S}
M.~Durufl\'{e}, H.~Haddar, and P.~Joly.
\newblock Higher order generalized impedance boundary conditions in
  electromagnetic scattering problems.
\newblock {\em C.R. Physique}, 7(5):533--542, 2006.

\bibitem{HJ02-201}
H.~Haddar and P.~Joly.
\newblock Stability of thin layer approximation of electromagnetic waves
  scattering by linear and nonlinear coatings.
\newblock {\em J. Comput. Appl. Math.}, 143(2):201--236, 2002.

\bibitem{HJN05-1273}
H.~Haddar, P.~Joly, and H.-M. Nguyen.
\newblock Generalized impedance boundary conditions for scattering by strongly
  absorbing obstacles: the scalar case.
\newblock {\em Math. Models Methods Appl. Sci.}, 15(8):1273--1300, 2005.

\bibitem{Kir98}
A.~Kirsch.
\newblock Characterization of the shape of a scattering obstacle using the
  spectral data of the far field operator.
\newblock {\em Inverse Problems}, 14(6):1489, 1998.

\bibitem{kirsch2008factorization}
A.~Kirsch and N.~Grinberg.
\newblock {\em The factorization method for inverse problems}.
\newblock Oxford lecture series in mathematics and its applications. Oxford
  University Press, 2008.

\bibitem{Koy03}
D.~Koyama.
\newblock A controllability method with an artificial boundary condition for
  the exterior helmholtz problem.
\newblock {\em Japan J. Indust. Appl. Math.}, 20(1):117--145, 2003.

\bibitem{Lec06}
A.~Lechleiter.
\newblock A regularization technique for the factorization method.
\newblock {\em Inverse Problems}, 22(5):1605, 2006.

\bibitem{McL00}
W.~McLean.
\newblock {\em {Strongly elliptic systems and boundary integral equations}}.
\newblock Cambridge University Press, 2000.

\bibitem{Ned01}
J.-C. N{\'e}d{\'e}lec.
\newblock {\em {Acoustic and Electromagnetic Equations}}.
\newblock Springer-Verlag, 2001.

\bibitem{bookSV95}
T.B.A. Senior and J.L. Volakis.
\newblock {\em Approximate boundary conditions in electromagnetics}.
\newblock 41. IEE Electromagnetic waves series, 1995.

\bibitem{TiGoStYa95}
A.N. Tikhonov, A.~Goncharsky, V.V. Stepanov, and A.G. Yagola.
\newblock {\em Numerical Methods for the Solution of Ill-Posed Problems}.
\newblock Springer, 1995.

\bibitem{vernhet}
L.~Vernhet.
\newblock Boundary element solution of a scattering problem involving a
  generalized impedance boundary condition.
\newblock {\em Math. Methods Appl. Sci.}, 22(7):587--603, 1999.

\end{thebibliography}

\end{document}